\documentclass[11pt]{article}
\usepackage{geometry}                
\geometry{a4paper}  
\usepackage{amsthm,amsmath,amssymb}

\usepackage{graphicx}

\usepackage[colorlinks=true,citecolor=black,linkcolor=black,urlcolor=blue]{hyperref}

\usepackage{pgfplots}
\pgfplotsset{width=6cm,compat=1.8}
\usepackage{mathtools}
\usepackage{color}
\usepackage{tikz}
\usetikzlibrary{arrows}
\usepackage{subfigure}



\theoremstyle{plain}
\newtheorem{theorem}{Theorem}
\newtheorem{lemma}[theorem]{Lemma}
\newtheorem{corollary}[theorem]{Corollary}

\theoremstyle{definition}
\newtheorem{definition}[theorem]{Definition}
\newtheorem{example}[theorem]{Example}
\newtheorem{conjecture}[theorem]{Conjecture}

\theoremstyle{remark}
\newtheorem{remark}[theorem]{Remark}



\newcommand{\CA}{{\mathcal{A}}}

\newcommand{\Cl}{{\textit{l}}}
\newcommand{\sign}{\textsf{sign}}



\newcommand{\bae}{\begin{equation}\begin{aligned}}
\newcommand{\eae}{\end{aligned}\end{equation}}

\newcommand{\pr}{\mathbb{P}}
\newcommand{\Z}{\mathbb{Z}}
\newcommand{\T}{\mathbb{T}}
\newcommand{\N}{\mathbb{N}}


\newcommand{\aseq}{{\mathbf{a}}}
\newcommand{\bseq}{{\mathbf{b}}}
\newcommand{\cseq}{{\mathbf{c}}}

\newcommand{\period}[1]{\overline{(#1)}}

\newcommand{\pere}[1]{{#1}_0}

\newcommand{\fils}[2]{{#1}_{#2}}


\title{Infinite excursions of rotor walks on regular trees}
\author{Sebastian M\"uller\\
\small Aix Marseille Univ\\[-0.8ex]
\small CNRS, Centrale Marseille\\[-0.8ex]
\small I2M\\[-0.8ex]
\small Marseille, France\\
\small\tt sebastian.muller@univ-amu.fr\\
\and
 Tal Orenshtein\thanks{The work of T.O.\ was supported by the Labex Milyon (ANR-10-LABX-0070) of Universit\'e de Lyon, within the program "Investissements d'Avenir" (ANR-11-IDEX-0007) operated by the French National Research Agency (ANR).
}\\
\small Humboldt Universit\"at zu Berlin\\[-0.8ex]
\small Institut f\"ur Mathematik\\[-0.8ex]
\small Berlin, Germany\\
\small and \\
\small Technische Universit\"at Berlin\\[-0.8ex]
\small Institut f\"ur Mathematik\\[-0.8ex]
\small Berlin, Germany\\
\small\tt orenshtein@tu-berlin.de
}

\date{}
                                    
\begin{document}

\maketitle   

\begin{abstract}
A rotor configuration on a graph contains in every vertex an infinite ordered sequence of rotors, each is pointing to a neighbor of the vertex. After sampling a configuration according to some probability measure, 
a rotor walk is a deterministic process:
at each step it chooses the next unused rotor in its current location, and uses it to jump to the neighboring vertex
to which it points. Rotor walks capture many aspects of the expected behavior of simple random walks.
However, this similarity breaks down for the property of having an infinite excursion.
In this paper we study that question for natural random configuration models on regular trees. Our results suggest that in this context the rotor model behaves like the simple random walk unless it is not ``close to''
the standard rotor-router model.

  \bigskip\noindent \textbf{Keywords:} rotor walk; self interacting walk; regular tree; recurrence; transience; multi-type branching process
  
  \noindent \textbf{Mathematics Subject Classifications:} 05C05, 60J10, 60J80,  82C20

\end{abstract}
\section{Introduction}

\subsection{Informal motivation}
We consider first rotor walks on $\N$: on each vertex $n$ there is an infinite \emph{rotor sequence} $\aseq_{n}\in \{-1,1\}^{\N}$ pointing to one of the neighbors.
The walk starts in the origin. Inductively, the walk being at $n$ follows the direction of the first rotor in $\aseq_{n}$ and deletes this rotor.
Assume that each $\aseq_n$ is non-degenerate, i.e. it contains infinitely many $-1$'s and $+1$'s.  The first question that we ask is whether having at each vertex a ``local drift'' zero implies ``recurrence'' of the rotor walk;
call a rotor sequence $\aseq$ \emph{$L$-periodic} if $\aseq(x)=\aseq(x+L)$ for all $x\in \N$ and \emph{balanced} if there are as many $-1$'s as $+1$'s per period.
\begin{itemize}
\item Fix a period $L$. In optimizing over all $L$-periodic balanced choices of $\aseq_{n}$, $n\in\N$, what is the maximal number of ``infinite excursions'' that can be achieved?
\item Choose $\aseq_{n}$, $n\in \N,$ in an i.i.d.\ way. What are the conditions which ensure that the rotor walk is recurrent a.s.?
\end{itemize}
Consider the same model on the binary tree $\T_{2}$. Now, we have three possible directions to choose from. Let $0$ describe the direction  towards the root and let $1$ and $2$ stand for pointing towards the two children respectively, see Figure \ref{fig:ex rr on T2}.
A rotor sequence $\aseq$ now takes values in $\{0,1,2\}^{\N}$.
It is well known that the simple random walk on the binary tree is transient. However, the following rotor(-router) 
walk is recurrent, see \cite{Pa:07, AnHo:11}. Consider the $3$-periodic rotor sequences
\begin{equation*}
\aseq^{(1)}=(0,1,2,0,1,2,\ldots),  \aseq^{(2)}=(1,2,0,1,2,0,\ldots) \mbox{ and }\aseq^{(3)}=(2,0,1,2,0,1,\ldots),
\end{equation*}
and choose for each vertex of $\T_{2}$ independently one of these three sequences with equal probability. 
This behavior difference is somewhat surprising, since rotor walks share many properties with the simple random walk.
\begin{itemize}
\item Is it a general phenomenon that (periodic and balanced) rotor walks are recurrent on $\T_{2}$?
\end{itemize}
We answer this question negatively. In particular, consider the $6$-periodic sequences
\begin{equation*}
\aseq^{(1)}=(0,0,1,2,1,2,\ldots),  \aseq^{(2)}=(1,1,2,0,2,0,\ldots) \mbox{ and }\aseq^{(3)}=(2,2,0,1,0,1,\ldots),
\end{equation*}
then the corresponding rotor walk in the i.i.d.\  uniform configuration model is transient a.s.
\begin{itemize}
\item Which sequences $\aseq$ give rise to recurrent rotor walks?
\end{itemize}
\begin{itemize}
\item Are there interesting i.i.d.\ rotor configurations on $\T_{d}, d\geq 3,$ that are recurrent?
\end{itemize}

\subsection{General introduction and results}
A rotor walk on a graph is a deterministic process where a particle is routed through the vertices of a graph. At each vertex the particle is
routed to one of the neighboring vertices following a prescribed periodic sequence, called  the \textit{rotor sequence}.
In the classical model, called \textit{rotor-router walk}, the rotor sequence is a fixed cyclic order of the neighboring vertices. 
For an overview of this model and its classic properties we refer to the expository paper \cite{HoLietal:08}.
In this paper we consider configurations that may arise from any non-degenerate sequence. By \textit{non-degenerate} we mean
that there are infinitely many rotors pointing to every neighboring vertex. In this more general case we speak of \textit{rotor walks}. Note that this model was also introduced in \cite{Wi:96} as \textit{stack walks}.

Rotor walks capture in many aspects the expected behavior of simple random walks, but with significantly reduced fluctuations compared to a typical random walk trajectory; for more details see \cite{CoSp:06, FrLe:11, HoPr:10, Kl:05}.
However, this similarity breaks down when one looks at the property of being recurrent or transient. In fact, the rotor walk may
behave differently than the corresponding random walk. We say that a rotor walk which started at the origin with initial rotor configuration $\rho$ is \textit{recurrent} if it returns to the origin infinitely many times; in this case we say that the rotor configuration $\rho$ is recurrent. Otherwise we say that the rotor walk is \textit{transient} or that  the rotor configuration $\rho$ is transient.
In the recurrent regime all excursions (from the origin) are finite. However, in the transient regime there is a first \textit{infinite excursion}. This excursion eventually leaves every finite ball around the origin and hence when it reaches ``infinity'' it  leaves a well defined rotor configuration. For this reason we can start a new walk after the first infinite excursion  in the origin and proceed inductively.

In \cite{AnHo:12} existence of recurrent initial configurations for the  rotor-router walk on many graphs, including $\Z^{d}$, and planar graphs with locally finite embeddings is proved.
An example of an initial rotor-router configuration on $\Z^{2}$ for which the rotor-router walk is recurrent was given earlier in \cite[Theorem 5]{HoPr:10}. See also \cite{FletAl:14} for initial rotor-router configurations with all rotors aligned on $\mathbb{Z}^d$. Infinite excursions of rotor-router walks on homogeneous trees were studied in \cite{Pa:07} and \cite{LaLe:09}. On general trees, the issue of transience and recurrence of rotor-router walks was studied in detail in \cite{AnHo:11}. An extension for random initial configuration of rotor-router walks was made in \cite{HuSa:12}
on directed covers of graphs (periodic trees) and in \cite{HuMuSa:15} on Galton-Watson trees.

In this paper we give criteria for recurrence and transience of rotor walks on $d$-ary trees $\T_{d}, d\geq 1$. Recurrence of rotor-router walks on $\Z$ (and similarly on $\N=\T_{1}$) is rather obvious.
Indeed, if we start a rotor-router walk on $\Z$ with i.i.d.\ uniform initial configuration then the walk has a simple structure. It follows the rotors in one direction until it
hits a rotor pointing in the opposite direction. The walk reverses its direction and retraces its path entirely and continues until it hits a rotor pointing in the opposite direction and so on.
This behavior was presented in  \cite{PretAl:96} as an example of self-organization.

A fundamental tool used in this paper is a connection, observed in \cite{harris1952first},  between nearest neighbor walks and Galton-Watson processes.  In the context of random walks in random environments, 
its usefulness was demonstrated in the well-known paper \cite{kesten1975limit}. 
In the special case of the rotor-router model a more specific construction of a Galton-Watson process was used to prove transience criteria
in \cite{Pa:07} and \cite{AnHo:11}.

Rotor walks can be seen as a special case of excited random walks
by regarding the rotors as non-elliptic cookies. This deterministic point of view is already observed and extensively 
used in \cite{ABO} and \cite{amir2016excited}, where rotors are called `arrows'. In the context of excited random walks on the one-dimensional lattice the relation to   Galton-Watson processes was used first in \cite{basdevant2008speed}. 
The case of excited random walks on regular trees and their relations to survival of multi-type Galton-Watson processes  was introduced by the same authors in \cite{BS2009recurrence}. The interested reader may find  more details on excited random walks in \cite{benjamini2003excited} or in the survey \cite{kosygina2012excited}.

In our setting the relevant process is based on \cite{BS2009recurrence} and can be considered as a multi-type Galton-Watson process with \emph{a priori} infinitely many types, 
see Chapter \ref{sec:d-arry}. For general background on multi-type Galton-Watson processes we refer to \cite{AN:72, Ha:63}.

In the first part of paper we give criteria for recurrence for rotor walks on $\N$, see Theorem \ref{theorem:rec trans Z}, Corollary \ref{cor:random_starting_point_is_recurrent} and Theorem \ref{thm:random_starting_point_non-blanced_is_transient}.
Moreover, we observe another phenomenon of self-organization; we consider the case where a rotor sequence at some vertex can be any $L$-periodic (i.e. has period $L$)
and balanced sequence (i.e., there are as many rotors pointing to the right as pointing to the left in each period). In this case there are at most $L/2$ infinite excursions, see Theorem \ref{theorem:bounded infinite excursion N}. In other words, while there might be infinite excursions for the first walks the system organizes itself in such a way that after at most $L/2$ infinite excursions
it behaves as it ``should'', namely it is recurrent.

In the second part we consider rotor configurations on the homogeneous tree $\T_{d}, d\geq 2$. We give a criterion for recurrence and transience for the general model, see Theorem \ref{theorem:bal rc dary}. This criterion is based on the fact that
transience of the rotor walk is equivalent to survival of a multi-type Galton-Watson process.
We then show that $\T_{2}$ can be considered as the critical case in the following sense. We choose uniformly a rotation of a fixed rotor sequence 
independently in all vertices of $\T_2$. In this model, that we call uniform rotation model, the rotor walk may be recurrent or transient, see Theorem \ref{thm:iidrotationT2}. However,  in higher dimensions, $\T_{d}, d\geq 3$, the walk is always transient, see Theorem \ref{thm:iidrotationTd}.
We conjecture that this behavior holds also for another model,  the uniform shift model, see Conjectures \ref{conj:shiftbinarytree} and \ref{conj:shiftdarytree}.
In particular, our results generalize recurrence and transience criteria for rotor-router walks on regular trees \cite{AnHo:11}, periodic trees \cite{HuSa:12}, and Galton-Watson trees with bounded degree \cite{HuMuSa:15}.

\subsection{Formal definition.}
Let $(\T_d,o)$ be the $d$-ary rooted tree, $d\ge 1$. For a vertex $v\in\T_d$,
we denote by $\pere{v}$ the parent of $v$ and by
$\fils{v}{1},\fils{v}{2},\ldots,\fils{v}{d}$ its children. We have in mind a planar embedding of the tree where the root is at the top, each generation is located below its ancestors,
and the children are ordered from left to right. We add a self-loop to the origin $o$ and define $o_{0}=o$.
A \emph{rotor sequence} is a function $\aseq:\N\to \{0,\ldots,d\}$.
A rotor sequence $\aseq$ is called \emph{non-degenerate} if for every $r\in \{0,\ldots,d\}$
there are infinitely many $x \in\N$ such that $\aseq(x)=r$. We denote by $\CA\subset\{0,\ldots,d\}^\N$ the set of all non-degenerate rotor sequences.

For $v\in \T_d$, we consider a rotor sequence $\aseq_v$ on $v$ by
identifying the set $\{0,\ldots,d\}$ with $N_v=\{\pere{v},\fils{v}{1},\fils{v}{2},\ldots,\fils{v}{d}\}$, 
the ordered (from left to right) set of $v$'s neighboring vertices.
We refer the reader to Figure \ref{fig:ex rr on T2} for an illustration of these definitions.

\begin{figure}
\centering
\subfigure[Part of the binary tree $\T_{2}$.]{
\begin{tikzpicture}[line width=1pt, scale=1]
\draw[-] (0,0) arc (-90:270:0.16);
\begin{scope}[shift={(0,0)},rotate=-135]
\draw[-] (0,0) -- (1,0) node (A){};
\end{scope}
\begin{scope}[shift={(0,0)},rotate=-45]
\draw[-] (0,0) -- (1,0) node (B){};
\end{scope}

\begin{scope}[shift={(A)},rotate=-110]
\draw[-] (0,0) -- (1,0) node (AA){};
\end{scope}

\begin{scope}[shift={(A)},rotate=-70]
\draw[-] (0,0) -- (1,0) node (AB){};
\end{scope}

\begin{scope}[shift={(B)},rotate=-110]
\draw[-] (0,0) -- (1,0) node (BA){};
\end{scope}

\begin{scope}[shift={(B)},rotate=-70]
\draw[-] (0,0) -- (1,0) node (BB){};
\end{scope}

\begin{scope}[shift={(AA)},rotate=-105]
\draw[-] (0,0) -- (1,0) node (AAA){};
\end{scope}

\begin{scope}[shift={(AA)},rotate=-75]
\draw[-] (0,0) -- (1,0) node (AAB){};
\end{scope}

\begin{scope}[shift={(AB)},rotate=-105]
\draw[-] (0,0) -- (1,0) node (ABA){};
\end{scope}

\begin{scope}[shift={(AB)},rotate=-75]
\draw[-] (0,0) -- (1,0) node (ABB){};
\end{scope}

\begin{scope}[shift={(BA)},rotate=-105]
\draw[-] (0,0) -- (1,0) node (BAA){};
\end{scope}

\begin{scope}[shift={(BA)},rotate=-75]
\draw[-] (0,0) -- (1,0) node (BAB){};
\end{scope}

\begin{scope}[shift={(BB)},rotate=-105]
\draw[-] (0,0) -- (1,0) node (BBA){};
\end{scope}

\begin{scope}[shift={(BB)},rotate=-75]
\draw[-] (0,0) -- (1,0) node (BBB){};
\end{scope}

\begin{scope}[shift={(AAA)},rotate=-100]
\draw[-, dotted] (0,0) -- (0.5,0) {};
\end{scope}

\begin{scope}[shift={(AAA)},rotate=-80]
\draw[-, dotted] (0,0) -- (0.5,0) {};
\end{scope}

\begin{scope}[shift={(AAB)},rotate=-100]
\draw[-, dotted] (0,0) -- (0.5,0) {};
\end{scope}

\begin{scope}[shift={(AAB)},rotate=-80]
\draw[-, dotted] (0,0) -- (0.5,0) {};
\end{scope}

\begin{scope}[shift={(ABA)},rotate=-100]
\draw[-, dotted] (0,0) -- (0.5,0) {};
\end{scope}

\begin{scope}[shift={(ABA)},rotate=-80]
\draw[-, dotted] (0,0) -- (0.5,0) {};
\end{scope}

\begin{scope}[shift={(ABB)},rotate=-100]
\draw[-, dotted] (0,0) -- (0.5,0) {};
\end{scope}

\begin{scope}[shift={(ABB)},rotate=-80]
\draw[-, dotted] (0,0) -- (0.5,0) {};
\end{scope}

\begin{scope}[shift={(BAA)},rotate=-100]
\draw[-, dotted] (0,0) -- (0.5,0) {};
\end{scope}

\begin{scope}[shift={(BAA)},rotate=-80]
\draw[-, dotted] (0,0) -- (0.5,0) {};
\end{scope}

\begin{scope}[shift={(BAB)},rotate=-100]
\draw[-, dotted] (0,0) -- (0.5,0) {};
\end{scope}

\begin{scope}[shift={(BAB)},rotate=-80]
\draw[-, dotted] (0,0) -- (0.5,0) {};
\end{scope}

\begin{scope}[shift={(BBA)},rotate=-100]
\draw[-, dotted] (0,0) -- (0.5,0) {};
\end{scope}

\begin{scope}[shift={(BBA)},rotate=-80]
\draw[-, dotted] (0,0) -- (0.5,0) {};
\end{scope}

\begin{scope}[shift={(BBB)},rotate=-100]
\draw[-, dotted] (0,0) -- (0.5,0) {};
\end{scope}

\begin{scope}[shift={(BBB)},rotate=-80]
\draw[-, dotted] (0,0) -- (0.5,0) {};
\end{scope}
\end{tikzpicture}
}
\hspace{1cm}
\subfigure[Local details of $\T_{2}$.]{
\begin{tikzpicture}[scale=0.5]
\begin{scope}[line width=1pt, yshift=2cm]
\draw[->] (-8,-2,0) -- (-7,-3,0);
\draw[->] (-8,-2,0) -- (-9,-3,0);
\draw[->] (-8,-2,0) arc (-80:260:0.5);
\node[below, right] at (-7,-3,0) {$\fils{o}{2}$};
\node[below, left] at (-9,-3,0) {$ \fils{o}{1}$};
\node[above, right] at (-8,-2,0) {$o$};
\end{scope}
\begin{scope}[line width=1pt, yshift=-2.cm]
\draw[->] (-8,-2,0) -- (-7,-3,0);
\draw[->] (-8,-2,0) -- (-9,-3,0);
\draw[->] (-8,-2,0) -- (-8,-0.8,0);
\node[below, right] at (-7,-3,0) {$\fils{v}{2}$};
\node[above] at(-8,-0.8,0) {$ \pere{v}$};
\node[below, left] at (-9,-3,0) {$ \fils{v}{1}$};
\node[above, right] at (-8,-2,0) {$v$};
\end{scope}
\end{tikzpicture}
}
\hspace{1cm}
\subfigure[The rotor sequence $\aseq=(2,1,0,2,1,0\ldots)$ on  $\T_{2}$.]{
\label{fig:1a}
\begin{tikzpicture}[scale=0.5]
\begin{scope}[line width=1pt]
\draw[thin, dotted] (0,0,0) -- (0,-5,0);
\foreach \y in {1,...,6} {\draw[thin, dotted] (-2,-\y+1,0) -- (2.5,-\y+1,0) ; \node at (-3,-\y+1,0) {$\y$};	}
\node at (3,1,0) {};
\node at (3,0,0) {$2$};
\node at (3,-1,0) {$1$};
\node at (3,-2,0) {$0$};
\node at (3,-3,0) {$2$};
\node at (3,-4,0) {$1$};
\node at (3,-5,0) {$0$};	

\node at (0,1,0) {};	
\draw[->] (0,0,0) -- (1,-1,0);
\draw[->] (0,-1,0) -- (-1,-2,0);
\draw[->] (0,-2,0) -- (0,-1.2,0);
\draw[->] (0,-3,0) -- (1,-4,0);
\draw[->] (0,-4,0) -- (-1,-5,0);
\draw[->] (0,-5,0) -- (0,-4.2,0);	
\end{scope}	
\end{tikzpicture}
}
\caption{The notations for the binary tree $\T_{2}$.}
\label{fig:ex rr on T2}
\end{figure}
To each vertex of $\T_{d}$ we assign a non-degenerate rotor sequence to get a rotor configuration 
$\{\aseq_v\}_{v\in\T_d}\in\CA^{\T_d}$. The corresponding \emph{rotor walk} $X=(X_n)_{n\ge 0}$ on $\T_d$, 
with a local time $\Cl=(\Cl_n)_{n\ge0}$ is defined recursively as follows:
$$X_0:=o,\,\, \Cl_0\equiv 0, X_{n+1}=\fils{(X_n)}{i}, \text{ and } \Cl_{n+1}(v)=\Cl_n(v) +  \delta_{X_n}(v),$$ where $i=\aseq_{X_n}(1+\Cl_n(X_n))$.
In words, the rotor walk starts in $o$ at time $n=0$. At time $n=1$ it follows the direction of the first rotor in the sequence $\aseq_{o}$, i.e.\ it moves to $o_{\aseq_{o}(1)}$.
Inductively, assume that the rotor walk is in $v$ at time $n$, then at time $n+1$ the walk
moves in the direction of the first unused rotor in $v$ and moves to $\fils{v}{i}$, where $i=\aseq_{v}(1+\Cl_n(v))$.

We define a \emph{finite excursion} of $X$ to be a finite sequence $X_m,X_{m+1},\ldots,X_{m+n}$ starting at the root 
$X_m=o$ and ending with a self loop $X_{m+n-1}=X_{m+n}=o$ such that it does not contain any self loop before that time, 
that is $X_j\ne o$ if $X_{j-1}=o$ for all $m+1\le j \le m+n-1$.
An \emph{infinite excursion} of $X$ is an infinite sequence $X_m,X_{m+1},\ldots$ starting at the root $X_m=o$ so that  
$X_j\ne o$ whenever $X_{j-1}=o$ for all $j\ge m+1$. 

Whenever the rotor configuration is in $\CA^{\T_d}$, the walk $X$ is recurrent if and only if every vertex is 
visited infinitely many times. Indeed, if the walk visits the origin infinitely many times, 
by non-degeneracy all of its neighbors are visited infinitely often; 
hence, by induction this is the case for all vertices of the graph. 
We shall assume throughout the paper that the rotor sequences are
always non-degenerate, even whenever it is not mentioned explicitly.

\begin{definition}[Finitely supported distribution model]\label{def:general}
Let $p$ be a probability measure on $\CA$ with finite support $S$. Enumerating the elements in $S$ from $1$ to $|S|$ we rewrite $p$ as $p=(p_{1},p_{2},\ldots, p_{|S|})$; the corresponding
rotor configurations are written as $\aseq^{(1)},\ldots, \aseq^{(|S|)}\in\CA^{\T_d}$. 
We call the rotor distribution a \emph{finitely supported distribution} if we sample a rotor sequence in every vertex independently and identically distributed according to $p$.
\end{definition}

The following two models are interesting special cases of the finitely supported distribution model.

\begin{definition}[Uniform rotation model]\label{def:i.i.d.uniform_rotation}
Fix a rotor sequence $\aseq\in \CA$.
Let $\pi\in S_{d+1}$ be the permutation on the $d+1$ symbols $\{0,\ldots,d\}$ defined by the rotation $\pi:n\mapsto n+1 \mod (d+1)$. We denote by $\pi\aseq$ the rotor sequence given by a point-wise use of $\pi$ on $\aseq$:
$\pi \aseq(x):=\pi (\aseq(x))$, $x\in\N$.
We call the rotor distribution a \emph{uniform rotation} if we sample a rotor sequence in every vertex independently and identically distributed according to the uniform distribution on the set  of all $d+1$ rotations of $\aseq$: $\{ \aseq, \pi\aseq, \pi^2\aseq, \ldots, \pi^{d}\aseq\}$.
\end{definition}

A rotor sequence on the $d$-ary tree $\aseq$ is said to be \emph{L-periodic} if $\aseq(x+L)=\aseq(x)$ for all $x\in \N$ for some $L$.
We say that an $L$-periodic rotor sequence $\aseq$ is balanced if each of the values $\{0,1,2,\ldots, d\}$ appears $N=L/(d+1)$ times in the first $L$ rotors.

\begin{definition}[Uniform shift model]\label{def:i.i.d.uniform_shift}
Let $\aseq^{(1)}=(a_{n})_{n\in \N}\in \CA$ be an $L$-periodic rotor sequence. Let $S^{(i)}: (a_{n})_{n\in \N} \mapsto (a_{n+i})_{n\in \N}$ be the shift operator and define $\aseq^{(i)}=S^{(i-1)}\aseq^{(1)}$ for $i\in \{2,\ldots, L\}$.
We call the rotor distribution a \emph{uniform shift} if we sample a rotor sequence in every vertex independently and identically distributed according to the uniform distribution on the set  all possible shifts of
$\aseq$: $\{ \aseq^{(1)}, \ldots, \aseq^{(L)}\}$.
\end{definition}


\section{The unary tree}
In this section we analyze rotor walks on the unary tree $\N$. We begin in Section \ref{subsec:Random rotation of a rotor sequence: recurrence} 
with the uniform rotation model and show recurrence. We then turn to periodic sequences and bound the number of infinite excursions by half the period in Section
\ref{subsubsec:Deterministic properties: the number of infinite excursions}. Finally, in Section
\ref{subsubsec:L-periodic sequences: a criterion for recurrence} 
we derive a simple criterion for recurrence of the walk. 
For convenience of the reader and in order to emphasize the structure of $\N=\T_{1}$ we will in the rest of this 
section use  $-1$ and $+1$ for $0$ and $1$, respectively, in the rotor sequences.
\subsection{Uniform rotations: recurrence}\label{subsec:Random rotation of a rotor sequence: recurrence}

We shall show that the walk is recurrent on the unary tree $\N$, whenever $\aseq$ is a non-degenerate rotor sequence, and the (two) permutations on $\aseq$ are chosen uniformly and independently in all vertices.


\begin{theorem}\label{thm:Recurrence in dimension 1}
Let $\aseq$ be a rotor sequence. Then, the rotor walk on $\N$ in the uniform rotation model corresponding to $\aseq$ (see Definition \ref{def:i.i.d.uniform_rotation})  is recurrent a.s.
\end{theorem}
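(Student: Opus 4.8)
The plan is to show that the walk a.s.\ does not escape to $+\infty$, which is equivalent to recurrence. First I would record the reduction already implicit in the text: since every $\epsilon_v\aseq$ is non-degenerate, if the origin is visited infinitely often then so is every vertex, so the walk is recurrent; conversely, if it is transient then the origin is visited only finitely often and, by the same non-degeneracy propagated level by level, each vertex is visited finitely often and $X_n\to\infty$ along the walk's final excursion. Thus it suffices to prove $\pr(X_n\to\infty)=0$. Throughout I use the identification $0\mapsto -1$, $1\mapsto +1$, under which the two rotations of $\aseq$ are $\aseq$ and $\pi\aseq=-\aseq$; I write $\epsilon_v\in\{\pm1\}$ for the i.i.d.\ fair signs, with $\epsilon_v=+1$ meaning that vertex $v$ carries $\aseq$, and $s_m:=\sum_{j=1}^m\aseq(j)$.

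Next I would set up the crossing bookkeeping on the event $\{X_n\to\infty\}$. Over the final excursion, let $U_v$ and $D_v$ denote the (finite) numbers of up- and down-crossings of the edge $(v,v+1)$, and let $V_v$ be the number of visits to $v$. Conservation of crossings at each vertex gives $U_{v-1}+D_v=U_v+D_{v-1}$, and together with the boundary values $D_0=0$, $U_0=1$ this yields $U_v-D_v=1$ for all $v$, as well as $V_v=U_v+D_{v-1}=D_v+D_{v-1}+1$. Counting, among the first $V_v$ rotors of $\epsilon_v\aseq$, those equal to $+1$ (which are exactly the up-departures from $v$) gives $U_v=\tfrac12\bigl(V_v+\epsilon_v s_{V_v}\bigr)$, i.e.\ $U_v-D_{v-1}=\epsilon_v s_{V_v}$. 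Summing over $v=1,\dots,N$ and telescoping the left-hand side (using $U_v=D_v+1$) produces the key identity $D_N+N=\sum_{v=1}^N\epsilon_v s_{V_v}$. Since $D_N\ge 0$, an infinite excursion forces $\sum_{v=1}^N\epsilon_v s_{V_v}\ge N$ for every $N$.

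Finally I would derive a contradiction from the randomness of the signs. The decisive structural remark is that the visit profile determines the signs: $(V_v)$ and $(D_v)$ are in bijection through $D_0=0$ and $V_v=D_v+D_{v-1}+1$, and whenever $s_{V_v}\ne 0$ the identity $U_v-D_{v-1}=\epsilon_v s_{V_v}$ forces $\epsilon_v=(D_v-D_{v-1}+1)/s_{V_v}$, which must in addition lie in $\{\pm1\}$. Hence for each admissible finite profile $(V_1,\dots,V_N)$ the probability that the i.i.d.\ fair signs realise it is at most $2^{-f_N}$, where $f_N=\#\{v\le N: s_{V_v}\ne 0\}$; a union bound over admissible profiles (whose count is constrained by the relation $|D_v-D_{v-1}+1|=|s_{V_v}|$ linking successive entries) then gives $\pr(\text{the walk reaches level }N\text{ on an infinite excursion})\to 0$, and hence $\pr(X_n\to\infty)=0$.

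The heart of the argument, and the main obstacle, is precisely this dependence between the visit counts $V_v$ and the signs $\epsilon_v$. Were the $V_v$ independent of the environment, the bound $\sum_{v\le N}\epsilon_v s_{V_v}\ge N$ would be excluded immediately by the law of large numbers or by Azuma's inequality for the fair signs; but the trajectory, and hence every $V_v$, is a global functional of all the signs, so no such independence is available and one must argue through the profile-conditioning above. The delicate case is when $\aseq$ has unbounded partial sums $s_m$: then a single vertex can contribute a large term $\epsilon_v s_{V_v}$, and one has to check that the number of admissible length-$N$ profiles (controlled by runs of constant value in $\aseq$, which create blocks of mutually consistent $D_v$) does not outweigh the per-profile decay $2^{-f_N}$. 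Establishing that balance is the technical crux; the case of bounded partial sums $|s_m|\le 1$ is immediate, since there $\sum_{v\le N}\epsilon_v s_{V_v}\ge N$ forces $\epsilon_v s_{V_v}=1$ for every $v\le N$, which pins down $\epsilon_v$ at all indices with $s_{V_v}\ne 0$ and hence has probability zero.
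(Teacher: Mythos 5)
Your crossing bookkeeping is essentially correct and amounts to a reformulation of the paper's $Z^k$-recursion: with \emph{total} visit counts $V_v$ over the whole walk (not just the final excursion --- the rotors consumed at $v$ during the last excursion are not the first entries of $\epsilon_v\aseq$, though finite excursions cancel in $U_v-D_v$ so the identity survives this correction), transience does force $\sum_{v=1}^N\epsilon_v s_{V_v}\ge N$ for all $N$. The genuine gap is the concluding probabilistic step, which you yourself label the ``technical crux'' without proving. The union bound over admissible visit profiles cannot close as stated: the profile events are disjoint, so summing the per-profile bound $2^{-f_N}$ helps only if the number of admissible profiles is $o(2^{f_N})$, and this fails for very simple sequences. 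Take $\aseq=(+1,+1,-1,+1,-1,+1,-1,\dots)$, for which $s_m=1$ at odd $m$ and $s_m=2$ at even $m$. Then $f_N=N$, but given $D_{v-1}\ge 3$ the constraint $D_v-D_{v-1}+1=\epsilon_v s_{D_v+D_{v-1}+1}$ admits all four continuations $D_v\in\{D_{v-1}+1,\,D_{v-1},\,D_{v-1}-2,\,D_{v-1}-3\}$, so the number of admissible length-$N$ profiles grows like $4^N$ and your bound $\sum_P 2^{-f_N(P)}$ grows like $2^N$ instead of decaying (the walk for this $\aseq$ is nevertheless recurrent). Your closing remark covers only $|s_m|\le 1$, where the admissible profile is unique ($D_v\equiv 0$, $V_v\equiv 1$), and it is precisely that uniqueness which rescues the circularity that $V_v$ is a functional of \emph{all} the signs.

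The paper resolves exactly this difficulty deterministically rather than by counting trajectories. Its subduality lemma (Lemma \ref{lem:subduality}) gives $U_{\tau\aseq}\circ U_{\aseq}(x)\le x-1$ and $U_{\aseq}\circ U_{\tau\aseq}(x)\le x-1$ for every non-degenerate $\aseq$ and every $x>0$, and the matched-parentheses reduction upgrades this to: whenever $(\epsilon_1,\dots,\epsilon_N)$ contains equally many signs of each type, the composition $U_{\epsilon_N\aseq}\circ\cdots\circ U_{\epsilon_1\aseq}(k)\le k-1$ \emph{for every realization}. Since the partial-sum walk of the i.i.d.\ fair signs returns to zero infinitely often, $Z^k$ decreases by at least one along each return and must hit zero, with no need to enumerate visit profiles. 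To complete your route you would need an analogous statement that is uniform over trajectories --- e.g.\ that $\sum_{v\le N}\epsilon_v s_{V_v}<N$ is forced on sign-balanced stretches --- because the probabilistic union bound alone is insufficient.
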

Before we prove this theorem we introduce some more notations and basic facts.
For a rotor sequence $\aseq$ and $x\in\N$ we define 
\begin{equation}\label{def:T_functions}
T_\aseq(x)=\inf\left\{t\ge 0: \sum_{i=1}^t \mathbf{1} _{\aseq(i)=0}=x\right\},
\end{equation}
the number of rotors in $\aseq$ prior to $x$ rotors pointing to $0$. Define $U_\aseq(0):=0$ and
\begin{equation}\label{def:U_functions}
U_\aseq(x)=T_\aseq(x)-x
\end{equation}
be the number of $+1$-rotors in $\aseq$ prior to $x$ $-1$-rotors.
Call $U_\aseq$ \emph{the $U$-function} of $\aseq$.
\begin{lemma}\label{lem:subduality} \cite[Observation 2.8]{ABO} Let $\aseq$ be a rotor sequence. The following hold for the transposition $\tau$ of $-1$ and $+1$.
\begin{enumerate}
\item $U_\aseq(x)$ and $U_{\tau \aseq}(x)$ are monotonically non-decreasing in $x$.
\item $U_{\tau \aseq}(U_{\aseq} ( x ) ), U_\aseq(U_{\tau \aseq} ( x ) ) < x$ for every $x\in \N$, and equality holds if $x=0$.
\end{enumerate}
\end{lemma}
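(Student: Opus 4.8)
The plan is to translate both assertions into elementary counting statements about the positions of the $\pm 1$ rotors and read them off directly. First I would fix notation: let $m_1<m_2<\cdots$ be the positions of the $-1$-rotors in $\aseq$ and $p_1<p_2<\cdots$ the positions of the $+1$-rotors. By \eqref{def:T_functions} we have $T_\aseq(x)=m_x$, so by \eqref{def:U_functions} $U_\aseq(x)=m_x-x$ is exactly the number of $+1$'s occurring before the $x$-th $-1$. Since $\tau$ interchanges the two symbols, the $-1$-rotors of $\tau\aseq$ are the $+1$-rotors of $\aseq$, and hence $U_{\tau\aseq}(x)=p_x-x$ is the number of $-1$'s occurring before the $x$-th $+1$. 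With this dictionary the lemma becomes a statement about a monotone lattice (staircase) path whose north steps are the $-1$'s and whose east steps are the $+1$'s.

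For part (1) I would simply examine increments. Since the positions are strictly increasing, $m_{x+1}\ge m_x+1$, so $U_\aseq(x+1)-U_\aseq(x)=(m_{x+1}-m_x)-1\ge 0$, and $U_\aseq(0)=0\le U_\aseq(1)$ by definition; the identical computation with $p_x$ in place of $m_x$ handles $U_{\tau\aseq}$. This gives non-decreasingness.

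For part (2) set $k=U_\aseq(x)$, the number of $+1$'s strictly preceding the $x$-th $-1$ at position $m_x$. If $k=0$ then $U_{\tau\aseq}(k)=U_{\tau\aseq}(0)=0<x$ for every $x\ge 1$. If $k\ge 1$, the $k$-th $+1$ sits at some position $p_k<m_x$, and $U_{\tau\aseq}(k)=p_k-k$ counts the $-1$'s lying before $p_k$; since $p_k<m_x$, every such $-1$ is one of $m_1,\ldots,m_{x-1}$, so $U_{\tau\aseq}(k)\le x-1<x$. Interchanging the roles of $-1$ and $+1$ (that is, applying the same argument to $\tau\aseq$ and using $\tau\tau\aseq=\aseq$) yields $U_\aseq(U_{\tau\aseq}(x))<x$ as well. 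Finally, for $x=0$ both compositions reduce to $U_{\cdot}(0)=0$, which is the asserted equality.

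I do not expect a genuine obstacle here: the content is purely combinatorial bookkeeping. The one point requiring care is the handling of the boundary cases $k=0$ and $x=0$ together with the precise strict-versus-non-strict inequality, i.e.\ making sure that the $k$-th east step genuinely precedes the final ($x$-th) north step so that at most $x-1$ north steps have occurred before it. Keeping the convention ``$U_\aseq(x)=$ number of $+1$'s \emph{before} the $x$-th $-1$'' fixed throughout is what makes the strict inequality fall out cleanly.
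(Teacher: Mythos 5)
Your argument is correct: the translation $U_\aseq(x)=m_x-x$ (number of $+1$'s before the $x$-th $-1$) and $U_{\tau\aseq}(x)=p_x-x$ is exactly right, part (1) follows from $m_{x+1}\ge m_x+1$, and in part (2) the key step $p_k<m_x$ (so that at most $x-1$ of the $-1$'s precede $p_k$) is justified and the boundary cases $k=0$ and $x=0$ are handled. The paper itself gives no proof, deferring to Observation 2.8 of \cite{ABO}; your elementary counting argument is the natural one and fills that gap completely.
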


%
Fix $\{\aseq_v\}_{v\in\N}\in\CA^{\N}$ and $k\in\N$. We are now interested to know whether $X$ has $k$ finite excursions. 
For that we define a process $Z^k$ on the non-negative integers inductively by the following:
\begin{equation}\label{eq:def_of_Z_on_Unary}
Z^k_0:=k \text{ and } Z^k_{n}=U_{\aseq_{n-1}}(Z^k_{n-1}),\quad n\ge 1.
\end{equation}
Note that $Z^k_n$ is non-decreasing in $k$ by Lemma \ref{lem:subduality} (1), but not necessarily in $n$.
We also write $Z_{n}$ for $Z_{n}^{1}$. Here is an informal description of $Z^k$. The variable $Z_{1}^{k}$ is the number 
of rotors in $\aseq_{0}$ pointing away from $0$ (i.e., to the right) prior to $k$ rotors pointing back to $0$ 
(i.e., to the left). In other words, the $k$th excursion is finite if and only if the walk  jumps from $1$ to 
$0$ exactly $Z_{1}^{k}$ times. Assuming the $(k-1)$st excursion is finite, by induction on $n$, the $k$th excursion is finite 
if and only if for every vertex $n$ the walk jumps exactly a total number $Z_{n}^{k}$ times from $n$ to $n-1$ 
in the first $k$ excursions (which can be infinite time a prior).   Eventually, $Z_{n}^{k}$ is the number of times the vertex $n-1$ is visited from $n$ before the 
$k$th traversal of the loop in the root. 

The following Lemma is a now evident. It can also be considered as a consequence of Proposition 3.4 in \cite{BS2009recurrence} together with the deterministic point of view of Chapters 2.2 and 2.3 of \cite{ABO}.
\begin{lemma}\label{lem:rec_rw}  The first $k$ excursions of $X$ are (well defined and) finite if and only if $Z^k_n=0$ for some $n$.
\end{lemma}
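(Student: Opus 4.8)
The plan is to make precise the geometric bookkeeping sketched informally before the lemma, turning the recursion \eqref{eq:def_of_Z_on_Unary} into an exact count of edge crossings. For a trajectory $X$ and a time $t$ write $g_v(t)$ for the number of $+1$-rotors used at $v$ up to time $t$ (rightward departures from $v$) and $h_v(t)$ for the number of $-1$-rotors used at $v$ (leftward departures from $v$, resp.\ loops when $v=o$). Since the $j$-th excursion closes exactly when the $j$-th $-1$-rotor at the root is used, ``the first $k$ excursions are well defined and finite'' is equivalent to $\sigma_k<\infty$, where $\sigma_k$ denotes the time of the $k$-th loop at $o$. Two elementary identities drive the whole argument: reading off the rotor stack at $v$ gives $U_{\aseq_v}(h_v(t))\le g_v(t)\le U_{\aseq_v}(h_v(t)+1)$, and comparing the two orientations of the edge $\{v,v+1\}$ gives $h_{v+1}(t)=g_v(t)-\mathbf 1_{X_t\ge v+1}$. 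I also record that, by Lemma \ref{lem:subduality}(1) together with $U_\aseq(0)=0$, once $Z^k_n=0$ one has $Z^k_m=0$ for all $m\ge n$, so ``$Z^k_n=0$ for some $n$'' is the same as ``$Z^k_n=0$ eventually''.

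For the direction ``finite $\Rightarrow$ some $Z^k_n=0$'' I would evaluate everything at $t=\sigma_k<\infty$. There $X_{\sigma_k}=o$, so the indicator above vanishes for every $v\ge 1$ and $h_{v+1}(\sigma_k)=g_v(\sigma_k)$; moreover, since the walk rests at the root, the last departure from any $v\ge1$ must have been to the left (a final rightward departure would strand the walk in $\{v+1,v+2,\dots\}$), so the rotor stack at $v$ ends on a $-1$ and the left bound is tight: $g_v(\sigma_k)=U_{\aseq_v}(h_v(\sigma_k))$ for every $v$ (the case $h_v=0$ being trivial). Starting from $h_o(\sigma_k)=k=Z^k_0$, an immediate induction yields $g_v(\sigma_k)=Z^k_{v+1}$ and $h_v(\sigma_k)=Z^k_v$, i.e. $Z^k_n$ is exactly the number of crossings of the $n$-th edge during the first $k$ excursions. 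Finite excursions confine the walk to some ball $\{0,\dots,M\}$, so the edge $\{M,M+1\}$ is never crossed and $Z^k_{M+1}=g_M(\sigma_k)=0$.

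For the converse I would argue by contradiction, and this is where the real work sits. Let $n_0\ge 1$ be minimal with $Z^k_{n_0}=0$ (hence $Z^k_v\ge1$ for $v<n_0$), and let $\tau$ be the first time the walk crosses $\{n_0-1,n_0\}$. If $\tau=\infty$, or if at least $k$ loops are completed before $\tau$, then during $[0,\sigma_k]$ the walk stays in the finite set $\{0,\dots,n_0-1\}$; non-degeneracy of $\aseq_o$ forces infinitely many loops on a finite orbit, so $\sigma_k<\infty$ and the first $k$ excursions are finite, as wanted. The only remaining (``bad'') case is $\tau<\infty$ with fewer than $k$ loops before $\tau$, i.e. $h_o(\tau)\le k-1$. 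Here I run the same induction as an inequality: from $g_v(\tau)\le U_{\aseq_v}(h_v(\tau)+1)$, the monotonicity of $U$, and $h_{v+1}(\tau)=g_v(\tau)-1$ (the indicator equals $1$ since $X_\tau=n_0\ge v+1$ for all $v\le n_0-1$), I get inductively $h_v(\tau)\le Z^k_v-1$ and thus $g_v(\tau)\le Z^k_{v+1}$ for $0\le v\le n_0-1$. At the top this forces $g_{n_0-1}(\tau)\le Z^k_{n_0}=0$, so no $+1$-rotor was ever used at $n_0-1$ before $\tau$; but crossing $\{n_0-1,n_0\}$ at time $\tau$ uses exactly such a rotor, a contradiction.

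The main obstacle is precisely this last step: away from the loop times the walk may be in mid-excursion, so the clean equality $g_v=U_{\aseq_v}(h_v)$ of the forward direction degrades to the two-sided bound $U_{\aseq_v}(h_v)\le g_v\le U_{\aseq_v}(h_v+1)$, and one must verify that the extra ``$+1$'' in the argument of $U$ together with the compensating $-1$ from the indicator $\mathbf 1_{X_\tau\ge v+1}$ propagate the strict inequality $h_v(\tau)\le Z^k_v-1$ all the way up to level $n_0$. Monotonicity of $U$ (Lemma \ref{lem:subduality}(1)) is exactly the ingredient that keeps this chain under control.
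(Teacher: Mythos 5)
Your proof is correct, and it follows exactly the crossing-count interpretation of $Z^k_n$ that the paper uses to justify this lemma: the paper itself offers no formal proof, declaring the statement ``evident'' from the informal description of $Z^k$ and deferring to Proposition 3.4 of \cite{BS2009recurrence} and Chapters 2.2--2.3 of \cite{ABO}. Your write-up supplies the details the paper omits --- in particular the two-sided bound $U_{\aseq_v}(h_v(t))\le g_v(t)\le U_{\aseq_v}(h_v(t)+1)$ and the contradiction argument for the converse at the first hitting time of $n_0$ --- and both directions check out.
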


We now can prove Theorem \ref{thm:Recurrence in dimension 1}.

\begin{proof}[Proof of Theorem \ref{thm:Recurrence in dimension 1}]
Recurrence follows once we show that all excursions of $X$ are finite a.s.\
For notation, write $S_2=\{id,\pi\}$. For each rotor configuration $\{\aseq_v\}_{v\in\T_d}\in\CA^{\T_d}$ we let 
$Y_n:=\sign (\sigma_{n-1})$, where $\sigma_n\in S_2$ is such that
$\aseq_{n}=\sigma_{n} \aseq$, $n\ge 0$. Define $Q_0=0$ and $Q_n=\sum_{1=1}^n Y_i$.
Then the measure on $Q$ is distributed as a simple symmetric random walk on $\Z$, started at the origin.
For every $k\in \N$ we shall show that the $k$th excursion is well defined and finite a.s.
By Lemma \ref{lem:rec_rw}, it is enough to show that the process $Z^k_n=0$ for some finite time $n>0$ a.s. 
Since $Z_{n}^{k}$ is a Markov chain it follows by repeated use of the strong Markov property that it is enough 
to show that $Z^k_n$ gets below $k-1$ in finite time a.s.
Let $N$ be the first random positive integer $n\ge 1$ such that $Q_n=0$. We claim that $Z^k_{N}\le k-1$.
In other words, our goal is to show that the following decomposition of $N$ functions
\begin{equation}\label{eq:Znk}
U_{\sigma_{N-1} \aseq} \circ \cdots \circ  U_{\sigma_{2} \aseq} \circ U_{\sigma_{0} \aseq}(k)
\end{equation}
is bounded above by $k-1$. We first sketch the proof. By the definition of $N$, there are exactly $N/2$ appearances of $U_{\pi \aseq}$ and $N/2$ appearances of $U_{ \aseq}$ in the last decomposition. 
Therefore, one can regard the ordered sequence $(Y_N,\ldots,Y_1)$ as balanced parentheses where the rightmost sign corresponds to 
`$)$'  and the opposite sign corresponds to  `$($'. 
Then, using Lemma \ref{lem:subduality} we iteratively bound our expression by removing  $U_{\sigma_{j+1}\aseq}\circ U_{\sigma_{j}\aseq}$ that corresponds to the right-most couple `$()$'.  In the last iteration, we use Lemma \ref{lem:subduality} to conclude.    

More formally, let $J$ be the first $j$ so that $Y_j\ne Y_{j+1}$.
Due to Lemma \ref{lem:subduality} every factor of the form $U_{\pi \aseq} \circ U_{ \aseq} (y) $ or 
$U_{ \aseq} \circ U_{\pi \aseq}(y)$ is at most $y-1$.
In particular,  
\begin{equation*}
U_{\sigma_{J+1} \aseq} \circ U_{\sigma_{J} \aseq} \circ U_{\sigma_{J-1} \aseq} \circ \cdots \circ U_{\sigma_{1} \aseq}(k)\le
U_{\sigma_{J-1} \aseq} \circ \cdots \circ U_{\sigma_{1} \aseq}(k).
\end{equation*}
By Lemma \ref{lem:subduality}(1) every decomposition of some of the
functions $U_{\sigma_{j} \aseq}$ is also monotone. 
Therefore, the whole expression in \eqref{eq:Znk} is bounded by  
\begin{equation*}
U_{\sigma_{N} \aseq} \circ \cdots \circ  U_{\sigma_{J+2} \aseq} \circ U_{\sigma_{J-1} \aseq} \circ \cdots \circ U_{\sigma_{1} \aseq}(k)\le
U_{\sigma_{N} \aseq} \circ \cdots \circ  U_{\sigma_{J+2} \aseq} \circ U_{\sigma_{J-1} \aseq} \circ \cdots \circ U_{\sigma_{1} \aseq}(k).
\end{equation*}
By considering the right end side of the last inequality instead of \eqref{eq:Znk}, after a suitable update of the indices, we can iterate 
the argument above $N/2-1$ times to get 
\begin{equation*}
Z^k_N=U_{\sigma_{N-1} \aseq} \circ \cdots \circ  U_{\sigma_{2} \aseq} \circ U_{\sigma_{0} \aseq}(k) \le 
U_{\sigma_{j+m} \aseq}  \circ U_{\sigma_{j} \aseq}(k).
\end{equation*}
for some $j,m\ge 1$ with different signs $Y_j\ne Y_m$. Using Lemma \ref{lem:subduality} again, the last expression is bounded above by $k-1$.
\end{proof}

\subsection{Periodic balanced configurations: a criterion}\label{subsec:criterion for periodic balanced rotor configurations}

In this section we consider configurations on the unary tree where for each vertex there is an $L$-periodic and balanced rotor sequence.
\subsubsection{Deterministic properties.}\label{subsubsec:Deterministic properties: the number of infinite excursions}

The following properties of the $U$-functions (see the definition in \eqref{def:U_functions}) are immediate consequences of periodicity together with being balanced.
\begin{lemma}\label{lemma:prop_U}
Let $\aseq$ be an $L$-periodic balanced sequence, $U$ its $U$-function, and $N=L/2$. Then,
for $x=\alpha N + \beta$ with $\alpha,\beta\in\N$ and $1\leq \beta \leq  N$ we have that
\begin{equation*}
U(x)=\alpha N + U(\beta) \mbox{ and } U(\beta)\leq N.
\end{equation*}
\end{lemma}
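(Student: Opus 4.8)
The plan is to read off both assertions directly from the positions of the $-1$-rotors (equivalently, the rotors equal to $0$) within a single period, exploiting $L$-periodicity and balancedness. First I would fix the period and let $1\le p_1<p_2<\cdots<p_N\le L$ denote the positions in $\{1,\ldots,L\}$ at which $\aseq$ equals $-1$; there are exactly $N=L/2$ of them since $\aseq$ is balanced. Because $\aseq(x+L)=\aseq(x)$, the full (increasingly ordered) set of positions carrying a $-1$-rotor is $\{p_j+kL:\ 1\le j\le N,\ k\ge 0\}$. Hence, writing $x=\alpha N+\beta$ with $1\le\beta\le N$, the $x$-th $-1$-rotor is precisely the $\beta$-th one inside period number $\alpha+1$, so that $T_\aseq(\alpha N+\beta)=p_\beta+\alpha L$. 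In particular $T_\aseq(\beta)=p_\beta$ for $1\le\beta\le N$, which gives the clean recursion $T_\aseq(\alpha N+\beta)=T_\aseq(\beta)+\alpha L$.

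The first identity would then follow immediately from $U=T_\aseq-\mathrm{id}$ together with $L=2N$:
\[
U(\alpha N+\beta)=T_\aseq(\alpha N+\beta)-(\alpha N+\beta)=\big(T_\aseq(\beta)-\beta\big)+(2\alpha N-\alpha N)=U(\beta)+\alpha N,
\]
which is exactly the claimed $U(x)=\alpha N+U(\beta)$.

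For the inequality $U(\beta)\le N$, I would interpret $U(\beta)=T_\aseq(\beta)-\beta=p_\beta-\beta$ as the number of $+1$-rotors strictly preceding the $\beta$-th $-1$-rotor. Since this rotor sits at position $p_\beta\le L$, all of these $+1$-rotors lie inside the first period, which by balancedness contains exactly $N$ of them; hence their number is at most $N$. (The bound is sharp, as is visible from the period $(+1,\ldots,+1,-1,\ldots,-1)$ at $\beta=1$.)

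The argument is essentially a bookkeeping exercise, so the only real care needed is in the indexing. The main points to verify are the convention $1\le\beta\le N$ (which is what places the $x$-th $-1$-rotor in period $\alpha+1$ rather than on a period boundary), and the paper's running convention in this section that $-1$ and $+1$ stand for the symbols $0$ and $1$. I would double-check the boundary case $\beta=N$, where the $\beta$-th $-1$-rotor is the last one of its period, to confirm that both the recursion $T_\aseq(\alpha N+\beta)=T_\aseq(\beta)+\alpha L$ and the estimate $U(\beta)\le N$ still hold, and note that $U$ is well-defined at all these points by non-degeneracy of $\aseq$.
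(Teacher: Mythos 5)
Your proof is correct and is exactly the ``immediate consequence of periodicity together with being balanced'' that the paper invokes: it states Lemma \ref{lemma:prop_U} without proof, and your bookkeeping via $T_\aseq(\alpha N+\beta)=T_\aseq(\beta)+\alpha L$ and $U=T_\aseq-\mathrm{id}$ is the intended argument. Nothing further is needed.
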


For the proof of the next theorem we consider the notion of \emph{leftover environments}. We start with  a rotor configuration $\{\aseq_{n}\}_{n\in\N}$.
If the rotor walk is transient then  the total number of rotors $l(n)$ used in $n$ by the walk is finite. The rotor configuration after the first infinite excursion is well-defined and noted by $LO$. 
In other words, $LO(n,i):=\aseq_{n}(l(x)+i),\ n\in \N$.
Now another walker starts from the origin and moves according to the rotors in the leftover configuration. This procedure can be sequentially executed as long as the walkers are transient.

\begin{theorem}[Number of infinite excursions]\label{theorem:bounded infinite excursion N}
Let $L\in\N$. For any vertex $n\in \N$ choose (deterministically) an $L$-periodic and balanced rotor sequence $\aseq_{n}$. Then, the number of infinite excursions is bounded from above by $L/2$.
\end{theorem}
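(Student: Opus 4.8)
The plan is to track the vector-valued process $Z^k_n$ introduced in \eqref{eq:def_of_Z_on_Unary} and to show that once we have performed $L/2$ infinite excursions, the rotor walk starting in the resulting leftover environment is necessarily recurrent. Concretely, I would argue that the number of infinite excursions equals the largest $k$ for which the process $Z^k = (Z^k_n)_{n\ge 0}$ never hits $0$ (by Lemma \ref{lem:rec_rw}, the first $k$ excursions are finite precisely when $Z^k_n = 0$ for some $n$). So it suffices to show that $Z^k_n = 0$ for some $n$ whenever $k \ge N+1$, where $N = L/2$. The key structural input is Lemma \ref{lemma:prop_U}: writing $x = \alpha N + \beta$ with $1 \le \beta \le N$, each $U$-function acts as $U(x) = \alpha N + U(\beta)$ with $U(\beta) \le N$. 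This says the ``high part'' $\alpha N$ is transported rigidly through every composition, while the ``low part'' $\beta \in \{1,\dots,N\}$ is squeezed into $\{0,1,\dots,N\}$ at each step.

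First I would decompose $Z^k_n = \alpha_n N + \beta_n$ with $0 \le \beta_n \le N$ and read off from Lemma \ref{lemma:prop_U} the recursion $\alpha_n \mapsto \alpha_{n+1} = \alpha_n$ together with $\beta_{n+1} = U_{\aseq_n}(\beta_n)$, as long as $\beta_n \ge 1$; the point is that the quotient $\alpha_n$ is constant in $n$ and equals $\lfloor (k-1)/N \rfloor$ or similar, determined purely by $k$. Hence for $k \le N$ we start with $Z^k_0 = k \le N$, so $\alpha_0 = 0$ and the walk stays in the low regime, and nothing forces a zero — these are the (up to) $N$ potentially infinite excursions. For $k \ge N+1$, however, the constant high part is at least $N > 0$ throughout, and I would combine this with monotonicity (Lemma \ref{lem:subduality}(1)) and with the leftover-environment bookkeeping to show a zero must appear; the cleanest route is to pass to the leftover configuration $LO$ after the first $N$ excursions and argue that the $U$-function of $LO$ forces recurrence of all subsequent walks.

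The honest difficulty is matching the combinatorial statement ``at most $L/2$ infinite excursions'' to the leftover dynamics rather than merely to a single fixed environment: after each infinite excursion the environment changes to $LO$, and one must verify that the periodic-balanced structure (or at least enough of it, via Lemma \ref{lemma:prop_U}) is preserved or controlled across these successive leftover environments. I expect the main obstacle to be exactly this step — showing that the quotient part $\alpha_n$ of $Z^k_n$ is genuinely invariant under the $U$-composition in the leftover setting, and that it cannot be depleted by the low-part dynamics. Once that invariance is established, the conclusion is immediate: the surviving nonzero high part $\alpha_n N \ge N$ for $k > N$ contradicts the existence of an infinite excursion in a configuration whose leftover $U$-functions still satisfy Lemma \ref{lemma:prop_U}, so no $(N+1)$st infinite excursion can occur.

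I would therefore organize the proof as follows: establish the quotient/remainder decomposition of $Z^k_n$ from Lemma \ref{lemma:prop_U}; prove that the quotient is conserved along the composition while the remainder is trapped in $\{0,\dots,N\}$; deduce that for $k \ge N+1$ the process must vanish (invoking Lemma \ref{lem:rec_rw} to translate this into finiteness of the first $k$ excursions); and finally package the induction over successive leftover environments so that the total count of infinite excursions cannot exceed $N = L/2$. The balancedness is used only through Lemma \ref{lemma:prop_U}, and periodicity guarantees that the relevant bound $U(\beta) \le N$ holds uniformly, which is what makes the high part act as a conserved quantity.
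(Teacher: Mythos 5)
Your reduction is not correct, and the step you flag as ``the main obstacle'' is in fact the entire content of the theorem. First, the claim that the number of infinite excursions equals the largest $k$ for which $Z^k$ never hits $0$ fails: since $Z^k_n$ is non-decreasing in $k$ (Lemma \ref{lem:subduality}(1)), the set of $k$ for which $Z^k$ never vanishes is either empty or an infinite tail $\{k_1,k_1+1,\ldots\}$, so ``the largest such $k$'' is $0$ or $\infty$ and does not count infinite excursions. Consequently the proposed goal --- show $Z^k_n=0$ for some $n$ whenever $k\ge N+1$ --- would prove that \emph{all} excursions of the original walk are finite, i.e.\ recurrence, which is false: in Example \ref{ex:bd inf exc} one has $U(x)=x$, so $Z^k_n\equiv k$ never vanishes for any $k\ge 1$, yet the theorem (with $N=1$) is true there. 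Second, the ``conserved quotient'' is not conserved: writing $Z^k_n=\alpha_n N+\beta_n$ with $1\le\beta_n\le N$, if $U(\beta_n)=0$ then $Z^k_{n+1}=\alpha_n N$ must be re-decomposed as $(\alpha_n-1)N+N$, so $\alpha$ drops; and if $\alpha_n$ \emph{were} conserved and positive, $Z^k_n\ge N>0$ would hold forever, contradicting the vanishing you are trying to prove. These two assertions cannot both be part of a correct argument.

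The paper's proof works entirely in the \emph{leftover} environments, and the quantitative input you are missing is the crossing identity: after $N$ infinite excursions, for every vertex $n$ strictly beyond the range $R$ of all finite excursions so far, the number of right-rotors consumed at $n$ exceeds the number of left-rotors consumed at $n+1$ by exactly $N$, i.e.\ $r_n=\ell_{n+1}+N$. Plugging this into the definition of the leftover $U$-function gives $U^{(N)}_{n}(x)=U_{n}(x+\ell_{n})-\ell_{n+1}-N$ (equation \eqref{eq:U-f-n_after_N_excursions}), so each application of a leftover $U$-function carries a deterministic downward shift of $N$ which, by Lemma \ref{lemma:prop_U}, dominates the fluctuation $U(\beta)\le N$ of the low part. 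This forces $Z_{R+j}$ to decrease by at least $N$ per step and hence to vanish, making the $(N+1)$st walker's excursion finite; an induction (enlarging $R$ to absorb the new finite excursions) handles all subsequent walkers. Note also that the relevant leftover is the one after the first $N$ \emph{infinite} excursions, not after the first $N$ excursions of a single walk as your sketch suggests. Without the identity $r_n=\ell_{n+1}+N$ and the resulting drift $-N$ in the leftover $U$-functions, nothing in your outline produces the bound $N=L/2$.
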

\begin{proof}
Let us first give an intuition for the proof. For each infinite excursion the number of right crossings of an edge is the number of left 
crossing of this edge plus $1$. Assume now that there have been $N=L/2$ infinite excursions and denote by $R$ 
the right-most position of all finite excursions prior to this time. For each vertex $n>R$ the number of $+1$'s consumed in 
$n$ is the number of $-1$ used in $n+1$ plus $N$. This means that the leftover environment has a ``local drift'' to the left for 
all $n>R$. This turns out to be enough to prevent the next excursion to be infinite. By induction, all subsequent excursions will be finite as well. 

To make this argument precise denote by $U_{n}^{(N)}$ the $U$-function in vertex $n$ corresponding to the leftover
configuration after the $N$th infinite excursion. In other words, if $l^N(n)$ is the total number of rotors that were used by the first $N$ walkers
(see the paragraph before  Theorem \ref{theorem:bounded infinite excursion N}) then $U$ corresponds to the rotor configuration $\aseq^N_{n}$ which is given by $\aseq_{n}^N(i):=\aseq_{n}(i+l^N(x))$.

The $U$-functions $U_{n}$ for $n> R$ are not changed by the
finite excursions. In order to control the changes made by the infinite excursions we denote by $\ell_{n}$ (resp.\ $r_{n}$) the total number of left (resp.\ right) rotors
used by the rotor walk in $n$ after the $N$th infinite excursion. For all $n>R$ and every $x\in\N$ we now have
\begin{equation}\label{eq:U-f-n_after_N_excursions}
U_{n}^{(N)}(x)=U_{n}(x+\ell_{n})-r_{n}=U_{n}(x+\ell_{n})-\ell_{n+1}-N,
\end{equation}
since $r_n=\ell_{n+1}+N$. 
After the $N$th infinite excursion we start another rotor walk from the origin and show that its  excursion is finite. Note that whenever a configuration is $L$-periodic (and balanced) then
so is its leftover configuration. Therefore, due to Lemma \ref{lemma:prop_U} we have that $Z_{R}\leq N$, where $Z$ is the process defined in \eqref{eq:def_of_Z_on_Unary}
corresponding to the $U_n^{(N)}$'s. There are two cases: either the
rotor walk never visit $R$, or it does.
In the first case the excursion is finite by non-degeneracy. Hence it remains to consider the case $Z_{R}>0$.
From \eqref{eq:U-f-n_after_N_excursions} we have in this case that
\[
Z_{R+1}=U_{R+1}^{N}(Z_{R})=U_{R+1}( Z_{R}+\ell_{R+1})-\ell_{R+2}-N.
\]
Let $\alpha\in\N$ and $1\leq \beta\leq N$ such that
$$Z_{R}+\ell_{R+1}=\alpha N +\beta.$$
By Lemma \ref{lemma:prop_U} and \eqref{eq:U-f-n_after_N_excursions},
\begin{equation}\label{eq:Z_R+1}
Z_{R+1}=U_{R}(\alpha N +\beta)-\ell_{R+2}-N =(\alpha-1) N + U_{R}(\beta)-\ell_{R+2}.
\end{equation}
Now,
\begin{eqnarray*}
Z_{R+2} &=& U^{(N)}_{R+2} (Z_{R+1})\\
&=& U_{R+2}(Z_{R+1}+\ell_{R+2}) - \ell_{R+3}-N\\
&=& U_{R+2}((\alpha-1) N + U_{R+1}(\beta)-\ell_{R+2} +\ell_{R+2}) - \ell_{R+3}-N\\
&=& (\alpha-2) N + U_{R+2}\circ U_{R+1}(\beta) - \ell_{R+3},
\end{eqnarray*}
where we assume that all the terms are non-negative (otherwise $Z_{R+2}$ equals zero by definition), the second equality uses \eqref{eq:U-f-n_after_N_excursions}, and the third and fourth equalities follow from Lemma \ref{lemma:prop_U}.
Inductively, as long as  $Z_{R+j}>0$ we have
 \begin{eqnarray*}
Z_{R+j+1} &=& U^{(N)}_{R+j+1} (Z_{R+j})\\
&=& (\alpha-j-1) N + U_{R+j+1}\circ \cdots \circ U_{R+2}\circ U_{R+1}(\beta) - \ell_{R+j+2}.
\end{eqnarray*}
At each step the quantity $Z_{R+j}$ is reduced by at least $N$, hence this procedure stops after a finite number of steps. In other words $Z_{R+j+1}=0$ for some $j\le \alpha$. 
It follows by induction that every other excursion has to be finite too. Indeed, modifying the definition of $R$ to include all previous finite excursions, the argument is the same.
\end{proof}

\begin{example}\label{ex:bd inf exc}
The bound obtained in Theorem \ref{theorem:bounded infinite excursion N} is sharp. Indeed, consider the case where all sequences equal the $2$-periodic sequence $(+1,-1,\ldots)$.
Then, the first excursion to the right is infinite, but all subsequent excursions are finite, see also Figure \ref{fig:ex:bd inf exc}.

\begin{figure}
\centering
\begin{tikzpicture}[scale=0.5]
\foreach \x in {0,2,...,10}
{ \draw[thin, dotted] (\x,0) -- (\x,-7);
\foreach \y in {0,-2,-4,-6}{\draw[black,line width=1pt,->] (\x,\y) -- (\x+1,\y);	}
\foreach \y in {-1,-3,-5,-7}{\draw[black,line width=1pt,->] (\x,\y) -- (\x-1,\y);	}
}
\begin{scope}[line width=1pt]
\draw[black, ->] (0,0.2) -- (11,0.2);
\draw[gray, ->] (0,-0.8) -- (0.5,-0.8) -- (0.5,-1.2) -- (0,-1.2);
\draw[gray, ->]  (0,-1.8)  -- (1.5,-0.8) -- (2.5, -0.8) -- (2.5,-1.2) --  (0, -3.2);
\draw[gray, ->] (0,-3.8)   -- (3.5,-0.8) -- (4.5,-0.8) -- (4.5,-1.2) -- (0,-5.2);
\draw[gray, ->] (0,-5.8)   -- (5.5,-0.8) -- (6.5,-0.8) -- (6.5,-1.2) -- (0,-7.2);
\end{scope}
\end{tikzpicture}
\caption{One infinite (black) excursion and the following four finite (gray) excursions in Example \ref{ex:bd inf exc}.}
\label{fig:ex:bd inf exc}
\end{figure}

\end{example}

\begin{remark} On $\Z$, in the case where the same rotor sequence is in all the vertices, it is possible to have infinite excursions to both directions, left and right.
Indeed, take $(+1,-1,-1,+1,\ldots)$, then $U(1)=U(2)=1$ so $Z^1$ survives. But also in the negative direction, we have $U(1)=0$ but
$U(2)=2$, and so $Z^2$ survives. Hence, here the first excursion is infinite (to the right), then we have a finite left excursion, and then an
infinite left excursion. 
\end{remark}

\subsubsection{\texorpdfstring{$L$}{L}-periodic sequences: a criterion for recurrence}\label{subsubsec:L-periodic sequences: a criterion for recurrence}

Let $\aseq^{(1)}, \aseq^{(2)},\ldots, \aseq_{\ell}$ be $L$-periodic balanced sequences. Denote by $U_{i}$ the $U$-function corresponding to the rotor configuration $\aseq_{i}$,
$i=1,\ldots,\ell$.
Let $p=(p_{1},p_{2},\ldots,p_{\ell})$ be a strictly positive probability vector: $p_i>0$ for all $i$ and $\sum_{i=1}^\ell p_i =1$. In the following we consider the rotor configuration with distribution $p$, see Definition  \ref{def:general}.
Define
\begin{equation}
k^{*}=\inf\{k\geq 1: \forall i:U_{i}(k)\geq k\}
\end{equation}
with the convention that $\inf \emptyset=\infty$.

\begin{theorem}\label{theorem:rec trans Z}
If $k^{*}<\infty$ then the walk is transient a.s.\ and otherwise it is recurrent a.s.
\end{theorem}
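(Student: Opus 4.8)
The plan is to recast everything through the processes $Z^k$ of \eqref{eq:def_of_Z_on_Unary} and Lemma~\ref{lem:rec_rw}: the first $k$ excursions are finite iff $Z^k$ hits $0$, so the walk is recurrent iff $Z^k$ hits $0$ for every $k$, and transient iff $Z^k$ survives (stays positive) for some $k$. Note that each $Z^k$ is a time-homogeneous Markov chain on $\mathbb{Z}_{\ge 0}$ driven by the i.i.d.\ environment, and that $0$ is absorbing since $U_\aseq(0)=0$. Since $Z^k_n$ is non-decreasing in $k$ by Lemma~\ref{lem:subduality}(1) and $Z\ge 0$, a single absorption statement for $Z^k$ at large $k$ also covers all smaller $k$; in fact the argument below applies to each fixed $k$ directly.

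For the transient case $k^*<\infty$ I would argue deterministically. Take $k=k^*$. By definition of $k^*$ we have $U_i(k^*)\ge k^*$ for every $i$, and each $U_i$ is non-decreasing by Lemma~\ref{lem:subduality}(1). Hence, starting from $Z^{k^*}_0=k^*$, induction gives $Z^{k^*}_{n+1}=U_{\aseq_n}(Z^{k^*}_n)\ge U_{\aseq_n}(k^*)\ge k^*$ whenever $Z^{k^*}_n\ge k^*$, so $Z^{k^*}_n\ge k^*\ge 1$ for all $n$ and for every realization of the environment. By Lemma~\ref{lem:rec_rw} the first $k^*$ excursions are then not all finite, so the walk has an infinite excursion and is transient (surely, in particular a.s.).

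The substance lies in the recurrent case $k^*=\infty$, i.e.\ for every $k\ge 1$ there is an index $i$ with $U_i(k)<k$, and $p_i>0$ by strict positivity of $p$. Two structural facts turn $Z^k$ into a finite absorbing chain that must reach $0$. First, boundedness: by Lemma~\ref{lemma:prop_U}, writing $z=\alpha N+\beta$ with $1\le\beta\le N$ gives $U_i(z)=\alpha N+U_i(\beta)\le(\alpha+1)N$, so any set $\{0,1,\dots,cN\}$ is forward-invariant under all the $U_i$; choosing $c=\lceil k/N\rceil$ confines $Z^k$ to the finite state space $\{0,\dots,cN\}$. Second, reachability of $0$: from any state $z\ge 1$, the hypothesis $k^*=\infty$ applied to this particular $z$ furnishes an index $i$ with $p_i>0$ and $U_i(z)<z$, so the chain strictly decreases with probability at least $p_i>0$; iterating along a strictly decreasing path reaches $0$ in at most $z\le cN$ steps with probability at least $(\min_i p_i)^{cN}>0$. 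Thus $0$ is reachable from every state with a uniform positive lower bound within a fixed number of steps, and the standard finite-Markov-chain argument (split time into blocks of length $cN$, each having absorption probability at least $(\min_i p_i)^{cN}$) shows $Z^k$ is absorbed at $0$ a.s. As this holds for every $k$, Lemma~\ref{lem:rec_rw} yields recurrence.

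I expect the main obstacle to be the two structural facts about $Z^k$ rather than the probabilistic conclusion: recognizing that Lemma~\ref{lemma:prop_U} forces $Z^k$ into a finite, forward-invariant window $\{0,\dots,cN\}$, and that the bare condition ``for every $k$ there is $i$ with $U_i(k)<k$'' is precisely what guarantees a positive-probability strict decrease from every positive state, hence reachability of the absorbing state $0$. Once these are established the absorption is routine finite-chain theory, and the monotonicity of $k\mapsto Z^k_n$ (from Lemma~\ref{lem:subduality}(1)) provides a clean safety net reducing all $k$ to arbitrarily large $k$ if one prefers to run the absorption argument only once.
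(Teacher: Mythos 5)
Your proof is correct, and the transient half is identical to the paper's (take $k=k^*$ and observe that $Z^{k^*}_n\ge k^*$ for all $n$ by monotonicity of the $U$-functions). In the recurrent half you take a genuinely different, though related, route. The paper uses Lemma~\ref{lemma:prop_U} twice: once for the bound $Z^x_n\le(\alpha+1)N$, and a second time to reduce everything to residues $\beta\in\{1,\dots,N\}$, which lets it fix \emph{in advance} a single block of environments $(\aseq_{i_N},\dots,\aseq_{i_1})$ of length $N$ whose composed $U$-functions annihilate any residue; each occurrence of that block (infinitely many, by Borel--Cantelli) knocks $Z^x$ down by a full level of $N$, so after $\alpha$ occurrences it hits $0$. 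You instead use Lemma~\ref{lemma:prop_U} only for forward-invariance of the finite window $\{0,\dots,cN\}$, and then run an \emph{adaptive} argument: from each positive state the hypothesis $k^*=\infty$ supplies some index with $U_i(z)<z$ and $p_i>0$, so a state-dependent strictly decreasing path of length at most $cN$ reaches the absorbing state $0$ with probability at least $(\min_i p_i)^{cN}$, and the standard block/absorption argument for finite Markov chains finishes. Both arguments are sound; the paper's buys a cleaner universal killing block of length only $N$ by exploiting the mod-$N$ structure, while yours is slightly more robust in that it only needs boundedness of the state space plus the pointwise condition ``for every $z$ some $U_i(z)<z$'', at the cost of a longer (length $cN$) and trajectory-dependent good event. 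One small point worth making explicit in your write-up: the good event within a block is adaptive (the index chosen at step $j$ depends on the state reached, hence on the environment sampled earlier in the block), so the bound $(\min_i p_i)^{cN}$ should be justified by sequential conditioning on the i.i.d.\ environment rather than as the probability of a fixed cylinder; this is routine but should be said.
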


\begin{proof}
If $k^{*}<\infty$ then $Z_{n}^{k^{*}}\geq k^{*}$ for all $n\geq 1$ and by Lemma \ref{lem:rec_rw} the $k$th excursion is infinite for some $k\le k^*$, and the walk is transient.
For the other direction, assume that $k^{*}=\infty$. Let $x=\alpha N+\beta$ with $\alpha,\beta\in \N$ such that $1\leq \beta\leq N$. The theorem follows once we show that the $x$th excursion is finite for any choice of $x$.
By Lemma \ref{lem:rec_rw}, it is enough to show that a.s.\ $Z_{n}^x=0$ for some $n$. By Lemma \ref{lemma:prop_U}, $Z_{n}^{x}\leq (\alpha+1) N$ for all $n\geq 1$, and moreover, since $k^*=\infty$
then for every $k\in\{1,\ldots,N\}$ there is some $i_k\in\{1,\ldots,\ell \}$ such that $U_{i_k}(k)\le k-1$.
By monotonicity of the $U$-functions we have $$U_{i_1} \circ U_{i_2} \circ\ldots \circ U_{i_N}(N)\le U_{i_1} \circ U_{i_2} \circ\cdots \circ U_{i_{N-1}}(N-1)\le\ldots\le U_{i_1}(1)=0.$$
Using this together with Lemma \ref{lemma:prop_U} we have that $Z^x_{m+N}\le \alpha N$ if $(\aseq_{m},\ldots,\aseq_{m+N}) = (\aseq_{i_N},\ldots,\aseq_{i_1} )$.
Since the last event has a positive probability, by the i.i.d.\ assumption and the Borel-Cantelli Lemma we have that there are infinitely many such $m$'s a.s., denote them by $m_1,m_2,\ldots$.  Eventually, we have $Z^x_{m_\alpha+N}=0$.
\end{proof}


\begin{example}[Rotor-router walk]
Consider the $2$-periodic sequences $\aseq^{(1)}=(-1,+1,\ldots)$ and $\aseq^{(2)}=(+1,-1,\ldots)$, and let $p=(p_{1},1-p_{1})$ with $p_{1}\in(0,1)$. Then, $U_{1}(x)=x-1$ and $U_{2}(x)=x$ for all $x\in \N$ and hence $k^{*}=\infty$ which implies that the rotor walk is recurrent a.s.
\end{example}

\begin{corollary}[Shifts of a balanced sequence]\label{cor:random_starting_point_is_recurrent}
Let $\aseq$ be an $L$-periodic balanced sequence. Choose an i.i.d.\ configuration such that each of its shifts has a strictly positive probability. Then, the rotor walk is recurrent a.s.
\end{corollary}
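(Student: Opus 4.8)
The plan is to deduce Corollary \ref{cor:random_starting_point_is_recurrent} directly from Theorem \ref{theorem:rec trans Z} by showing that a balanced sequence and all of its shifts together force $k^*=\infty$. Concretely, the uniform shift model of Definition \ref{def:i.i.d.uniform_shift} samples each vertex's sequence uniformly from $\{\aseq^{(1)},\ldots,\aseq^{(L)}\}$, the $L$ cyclic shifts of a single $L$-periodic balanced $\aseq$; since every shift gets strictly positive probability, this is a special case of the finitely supported distribution model with a strictly positive probability vector, so Theorem \ref{theorem:rec trans Z} applies and it suffices to prove $k^*=\infty$, i.e.\ that for every $k\ge 1$ there exists at least one shift $\aseq^{(i)}$ whose $U$-function satisfies $U_i(k)\le k-1$.

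The key computation is to relate the $U$-functions of the various shifts to the partial sums of $\aseq$. Recall that $U_\aseq(x)$ counts the number of $+1$'s appearing before the $x$-th $-1$. The plan is to introduce the height walk $h(t)=\sum_{s=1}^t \aseq(s)$ (equivalently $h(t)=\#\{+1\text{'s}\}-\#\{-1\text{'s}\}$ up to time $t$), so that $U_\aseq(x)-x$, evaluated at $x$ equal to the number of $-1$'s consumed, is exactly the value of $h$ at the corresponding time. Shifting $\aseq$ to $\aseq^{(i)}=S^{(i-1)}\aseq^{(1)}$ simply re-bases this height walk at the $(i-1)$-st step, and by the balanced and $L$-periodic assumption the height walk returns to its starting level after each period of $L$ steps. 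The heart of the argument is then a pigeonhole/extremal observation: among the $L$ cyclic shifts there is one whose starting point coincides with a running maximum of $h$ over a period. From the perspective of that shift the height walk never strictly exceeds its initial level within a full period, which translates precisely into $U_i(k)\le k-1$ for the relevant range of $k$, giving the desired strict deficiency for each $k$.

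I would carry this out in three steps. First, fix any $k\in\{1,\ldots,N\}$ where $N=L/2$, and using Lemma \ref{lemma:prop_U} reduce the condition ``$U_i(k)\ge k$ fails for some $i$'' to the first period, so that only the values of $\aseq$ within one period matter. Second, encode the shifts via the height walk and identify a shift index $i=i(k)$ realizing the appropriate extremum of the partial sums; here is where I would invoke the cyclic-rotation trick (a discrete analogue of the cycle lemma) to guarantee the existence of a shift whose partial-sum profile lies weakly below its initial level up to the time the $k$-th $-1$ is reached. Third, conclude $U_{i(k)}(k)\le k-1$, so no $k$ can satisfy the universal condition defining $k^*$, whence $k^*=\infty$ and Theorem \ref{theorem:rec trans Z} yields recurrence a.s.

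The main obstacle I anticipate is making the extremal/cycle-lemma step uniform in $k$: a single cleverly chosen shift must witness $U_i(k)\le k-1$ simultaneously as a function of $k$, or else one must argue that for each $k$ some shift works and that this suffices for $k^*=\infty$ (the latter is what the definition of $k^*$ actually requires, since $k^*=\infty$ only needs that for each $k$ the universal-over-$i$ statement fails). I expect the cleaner route is the second: fix $k$, exhibit one shift with a strict deficiency at $k$, and let the shift depend on $k$. The delicate point is then correctly translating ``the height walk does not rise above its initial level before the $k$-th return/descent'' into the inequality $U_i(k)\le k-1$ without off-by-one errors, and ensuring the balanced hypothesis is used exactly where the period closes up. Once that dictionary between $U$-functions and height-walk excursions is pinned down, the corollary follows immediately from Theorem \ref{theorem:rec trans Z}.
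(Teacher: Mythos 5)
Your proposal is correct and follows essentially the same route as the paper: reduce to showing $k^*=\infty$ via Theorem \ref{theorem:rec trans Z}, pick the cyclic shift whose starting point is a running maximum of the partial-sum (height) walk so that all partial sums are $\le 0$ (this is exactly Lemma \ref{lem:Cycle}), and translate that into $U(x)\le x-1$ for all $x\ge 1$ (Lemma \ref{lem:strict_ineq_of_U}). The only cosmetic difference is that you hedge about letting the witnessing shift depend on $k$, whereas the paper observes that this single extremal shift works uniformly for all $k$ --- which your own extremal observation already delivers.
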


\begin{lemma}[]\label{lem:Cycle}
Assume that $a_i\in\{-1,1\}, i=1,2,\ldots,2n$ are such that
\begin{equation}\label{eq:nonPositiveSequence}
\sum_{i=1}^{2n} a_i=0.\end{equation}
For $k>2n$ define $a_k=a_i$ whenever $i\equiv k\mod 2n$. Then there is a starting point $j\in\{1,2,\ldots,2n\}$ so that $\sum_{i=j}^{j-1+k}a_i \le 0$ for all $k\ge 1$.
\end{lemma}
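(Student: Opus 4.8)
The plan is to recast the claim in terms of the partial sums of the periodically extended sequence and then choose the starting point so that the associated ``height'' is maximal. Set $S_0:=0$ and $S_m:=\sum_{i=1}^m a_i$ for $m\ge 1$, where $a_k$ for $k>2n$ is the periodic extension defined in the statement. Since $\sum_{i=1}^{2n}a_i=0$ by \eqref{eq:nonPositiveSequence}, the total increment over one full period vanishes, so an immediate induction gives $S_{m+2n}=S_m$ for every $m\ge 0$; that is, the ``height profile'' $m\mapsto S_m$ is periodic with period $2n$. The quantity we must control telescopes:
\begin{equation*}
\sum_{i=j}^{j-1+k}a_i=S_{j-1+k}-S_{j-1}.
\end{equation*}
Hence, writing $m:=j-1$, the lemma is equivalent to finding an index $m\in\{0,1,\ldots,2n-1\}$ such that $S_{m+k}\le S_m$ for all $k\ge 1$.

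Next I would choose $m$ to be \emph{any} index in $\{0,1,\ldots,2n-1\}$ at which the profile attains its maximum, i.e. $S_m=\max_{0\le i\le 2n-1}S_i=:M$. By the periodicity established above, for every $k\ge 1$ the value $S_{m+k}$ equals $S_r$, where $r\in\{0,1,\ldots,2n-1\}$ is the representative of $m+k$ modulo $2n$; in particular $S_{m+k}=S_r\le M=S_m$. Consequently $S_{m+k}-S_m\le 0$ for all $k\ge 1$, and setting $j:=m+1\in\{1,\ldots,2n\}$ yields $\sum_{i=j}^{j-1+k}a_i\le 0$ for all $k\ge 1$, as required.

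There is no serious obstacle here; the only points needing care are minor. First, one must justify the periodicity $S_{m+2n}=S_m$ cleanly from the balance condition together with the periodic definition of $a_k$, which is a one-line induction. Second, the maximum $M$ may be attained at several indices, but this causes no difficulty: since the conclusion only asks for the non-strict inequality $\le 0$, picking any maximizing index suffices, as ties merely produce equalities, which are permitted. This is the standard ``cycle lemma'' device of starting a balanced walk at its highest point, so that every subsequent forward displacement is non-positive.
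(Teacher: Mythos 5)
Your argument is correct and is exactly the paper's proof: the paper also views the $a_i$ as increments of a nearest-neighbour walk and takes the starting point where the walk attains its maximum, relying on periodicity of the partial sums. Your write-up just spells out the telescoping and the periodicity $S_{m+2n}=S_m$ that the paper leaves implicit.
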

One proof of the Lemma is based on a simple variation of the well known Cycle Lemma \cite{dvoretzky1947problem}. We shall supply an even shorter proof due to one of the referees.

\begin{proof}
Consider the $a_{i}$ as the increments of a nearest neighbor walk path. The index at which the walk reaches a maximum gives the desired starting point.
\end{proof}
The short proof of the next Lemma is due to the same referee as above.
\begin{lemma}\label{lem:strict_ineq_of_U}
Assume that $\aseq=(a_{1}, a_{2}, \ldots)$ with $a_i\in\{-1,1\}$  such that $\sum_{i=1}^{k}a_i \le 0$ for all $k\ge 0$. Let $U$ be the corresponding $U$-function. Then $U(x)<x$ for all $x>0$.
\end{lemma}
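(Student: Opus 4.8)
The plan is to translate the defining property of the $U$-function into a statement about the partial sums of $\aseq$, and then read off the conclusion directly from the hypothesis. Recall that in this section we identify $0$ with $-1$, so $T_\aseq(x)$ is the position of the $x$-th occurrence of $-1$ in $\aseq$, and $U(x)=T_\aseq(x)-x$ counts the number of $+1$'s appearing strictly before that occurrence.

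First I would fix $x>0$ and set $T:=T_\aseq(x)$, so that $a_T=-1$ is precisely the $x$-th $-1$ in the sequence. Among the first $T-1$ entries $a_1,\ldots,a_{T-1}$ there are then exactly $x-1$ entries equal to $-1$ (the $x$-th $-1$ sits at position $T$) and exactly $U(x)$ entries equal to $+1$ (every $+1$ counted by $U(x)$ lies before position $T$, and position $T$ itself is a $-1$). Consequently
\[
\sum_{i=1}^{T-1} a_i = U(x) - (x-1).
\]

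The key point — and the only place where strictness is gained — is to evaluate the partial sum at $T-1$ rather than at $T$. Since $x\ge 1$ forces $T\ge 1$, the index $T-1$ is a legitimate nonnegative value, so the hypothesis $\sum_{i=1}^{T-1}a_i\le 0$ applies and gives $U(x)-(x-1)\le 0$, that is $U(x)\le x-1<x$. Had I instead used the partial sum up to $T$ (where the $x$-th $-1$ is already included), I would only recover the non-strict bound $U(x)\le x$; dropping that final $-1$ is exactly what upgrades the estimate to the strict inequality. There is no genuine obstacle beyond this bookkeeping: the argument is complete once the composition of $a_1,\ldots,a_{T-1}$ into $U(x)$ many $+1$'s and $x-1$ many $-1$'s is counted correctly.
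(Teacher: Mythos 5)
Your proof is correct and is essentially identical to the paper's: both evaluate the partial sum at $k=T_\aseq(x)-1$ and use the identity $\sum_{i=1}^{T_\aseq(x)-1}a_i=U(x)-(x-1)$ to conclude $U(x)\le x-1$. The extra bookkeeping you supply (counting the $+1$'s and $-1$'s among the first $T-1$ entries) is just an expanded justification of the same one-line argument.
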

\begin{proof}
Remember that $T_\aseq(x)$ is the location of the $x$th -1 (see \eqref{def:T_functions}). Using the assumption for $k=T_\aseq(x)-1$ we have 
$0\geq \sum_{i=1}^{T_\aseq(x)-1}a_{i}=U(x)-(x-1)$.
\end{proof}

\begin{proof}[Proof of Corollary \ref{cor:random_starting_point_is_recurrent}]
Lemma \ref{lem:Cycle} gives us a starting point $j$ such that the equality in the lemma holds. Then by Lemma \ref{lem:strict_ineq_of_U} for $\aseq^{(j-1)}= S^{j-1}\aseq$, it holds that
$U_{\aseq^{j-1}}(x)<x$ for all $x\ge 1$. Therefore $k^*=\infty$ and by Theorem \ref{theorem:rec trans Z} the rotor walk is recurrent a.s.
\end{proof}

\begin{theorem}(Shift model: the sequence is balanced if and only if the walk is recurrent)\label{thm:random_starting_point_non-blanced_is_transient}
Let $\aseq$ be an $L$-periodic sequence. Choose an i.i.d.\ configuration such that each of the shifts has a strictly positive probability. If the number of $+1$'s per period is strictly larger than the numbers of $-1$'s,
then the rotor walk is transient a.s. Otherwise it is recurrent a.s.
\end{theorem}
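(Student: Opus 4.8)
Write $n_{+}$ and $n_{-}$ for the number of $+1$'s and $-1$'s in one period, so $n_{+}+n_{-}=L$ and, by non-degeneracy, $n_{+},n_{-}\ge 1$. The balanced case $n_{+}=n_{-}$ is exactly Corollary \ref{cor:random_starting_point_is_recurrent}, so the plan is to treat the two strict cases $n_{+}>n_{-}$ and $n_{+}<n_{-}$. Throughout I would work with the process $Z^k$ from \eqref{eq:def_of_Z_on_Unary} and the equivalence in Lemma \ref{lem:rec_rw}: recurrence amounts to $Z^k$ hitting $0$ for every $k$, transience to $Z^k$ surviving for some $k$. The first preliminary step is the non-balanced analogue of Lemma \ref{lemma:prop_U}: writing $x=\alpha n_{-}+\beta$ with $1\le\beta\le n_{-}$, one has $U_i(x)=\alpha n_{+}+U_i(\beta)$ with $U_i(\beta)\le n_{+}$, for every shift $i$; this follows immediately from periodicity by counting $+1$'s over $\alpha$ full periods. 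In particular $U_i(x)/x\to n_{+}/n_{-}$, and $U_i(x)\le \frac{n_{+}}{n_{-}}x+n_{+}$ for all $x$ and all $i$.

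For the transient case $n_{+}>n_{-}$ I would first extract a uniform threshold from the analogue above: since $U_i(x)\ge \alpha n_{+}$ while $x\le(\alpha+1)n_{-}$, the inequality $\alpha n_{+}>(\alpha+1)n_{-}$ — valid as soon as $\alpha>n_{-}/(n_{+}-n_{-})$ — forces $U_i(x)>x$ for all sufficiently large $x$, with a threshold $K$ that is uniform over the finitely many shifts. Taking $k=K$, monotonicity (Lemma \ref{lem:subduality}(1)) together with $U_i(K)\ge K$ gives, by induction, $Z^K_n\ge K$ for every $n$ and for every realization of the environment. Hence $Z^K$ never reaches $0$ and, by Lemma \ref{lem:rec_rw}, some excursion among the first $K$ is infinite. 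This reuses the first, balancedness-free, half of the proof of Theorem \ref{theorem:rec trans Z}, and it yields transience surely, in particular a.s.

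For the recurrent case $n_{+}<n_{-}$ the strategy has two deterministic ingredients followed by a probabilistic finish. The first is a confinement estimate: the bound $U_i(x)\le \frac{n_{+}}{n_{-}}x+n_{+}$ has contraction ratio $\frac{n_{+}}{n_{-}}<1$, so $Z^k_n-B\le\big(\tfrac{n_{+}}{n_{-}}\big)^n(k-B)$ with fixed point $B=\frac{n_{+}n_{-}}{n_{-}-n_{+}}$, whence there is a deterministic time $n_0=n_0(k)$ after which $Z^k_n$ stays below a constant $B':=\lceil B\rceil+1$. The second is a single contracting shift: the partial sums $S_t=\sum_{i\le t}a_i$ drift to $-\infty$ (the per-period increment is $n_{+}-n_{-}<0$), hence attain a global maximum at some index $k_0\in\{0,\ldots,L-1\}$ — this is the negative-drift variant of the Cycle Lemma \ref{lem:Cycle}, proved identically by ``going to the maximum''. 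The shift $\aseq^{(k_0+1)}$ then has all partial sums $\le 0$, so Lemma \ref{lem:strict_ineq_of_U} gives $U_{k_0+1}(x)\le x-1$ for all $x\ge 1$, and therefore $B'$ consecutive occurrences of this shift drive any state in $\{0,\ldots,B'\}$ to $0$. Finally, partitioning the vertices beyond $n_0$ into disjoint blocks of length $B'$, the events ``all shifts in the block equal $k_0+1$'' are independent with positive probability $p_{k_0+1}^{B'}$, so by Borel--Cantelli such a block occurs a.s.; at its left endpoint $Z^k$ is already $\le B'$, hence $Z^k$ reaches $0$. As this holds a.s.\ for every $k$, recurrence follows.

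The main obstacle is precisely the recurrent non-balanced case, because the recurrence half of Theorem \ref{theorem:rec trans Z} used balancedness essentially (Lemma \ref{lemma:prop_U} bounded $Z$ for free). Here boundedness must instead be extracted from the strict contraction $n_{+}/n_{-}<1$, and the ``good shift'' from the negative-drift version of the Cycle Lemma; once these two deterministic facts are in place the probabilistic conclusion is a routine Borel--Cantelli argument. I would double-check the uniformity of the threshold $K$ over shifts in the transient case, and the off-by-one indexing between a block of shifts and the corresponding applications of $U$ in the Borel--Cantelli step, since these are the places where a careless constant could break the argument.
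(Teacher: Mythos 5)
Your proposal is correct, and it diverges from the paper precisely on the half that requires real work. For the surplus-of-$+1$'s case you and the paper do essentially the same thing: exhibit a $k$ with $U_i(k)\ge k$ simultaneously for all shifts $i$ and conclude via the (balancedness-free) transience half of Theorem \ref{theorem:rec trans Z}; the paper gets the sharper threshold $k=\zeta+1$ directly from $U(\zeta+1)\ge\nu\ge\zeta+1$, whereas you take any $k$ beyond $n_{-}/(n_{+}-n_{-})$ periods --- both work. For the deficit case the paper does \emph{not} argue directly with the unbalanced sequence: it flips a few $-1$'s to $+1$'s to obtain a balanced sequence, observes that this can only increase the $U$-functions, and then applies Corollary \ref{cor:random_starting_point_is_recurrent} (the Cycle Lemma \ref{lem:Cycle} plus Lemma \ref{lem:strict_ineq_of_U}) to the modified sequence, so that the $Z$-process of the original configuration is dominated by that of a balanced configuration already known to die out. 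Your route keeps the unbalanced sequence and replaces the missing Lemma \ref{lemma:prop_U} by the affine contraction $U_i(x)\le\frac{n_{+}}{n_{-}}x+n_{+}$, which confines $Z^k$ to a deterministic bounded window, and then finishes with the negative-drift version of the Cycle Lemma and a block Borel--Cantelli argument. Both are valid; the paper's comparison is shorter but quotes Theorem \ref{theorem:rec trans Z} slightly outside its stated (balanced) hypotheses, which is really justified by the implicit monotone coupling of the two $Z$-processes, whereas your self-contained contraction argument avoids that issue at the cost of extra bookkeeping. The two points you flag for verification (uniformity of the threshold $K$ over the finitely many shifts, and the indexing of the good block) do go through.
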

\begin{proof}
If the number of $-1$'s per period is larger than the number of $+1$'s, we can compare the sequence with a periodic sequence by arbitrarily changing a few $-1$'s to $+1$'s in the period
so that the resulted sequence is balanced (if $L$ is odd, look at $2L$ instead). Note that the $U$-function corresponding to the original sequence is not more than the one corresponding to the new one.
Using the proof of Corollary \ref{cor:random_starting_point_is_recurrent} for the resulted sequence, we get that there is at least one $U$-function $U$ such that $U(x)<x$ for all $x>0$.
Therefore $k^*=\infty$ and by Theorem \ref{theorem:rec trans Z} the rotor walk is recurrent a.s.

For the other case, denote by $\nu$ the number of $+1$'s per period, and by $\zeta=L-\nu$ the number of $-1$'s per period. We assume that $\nu>\zeta$. Let $U$ be the $U$-function corresponding to an arbitrary (but fixed) shift.
Observe that by definition of $U$, $$U(\zeta + 1)\ge \# \text{$1$'s in the first period}=\nu\ge\zeta+1.$$ Therefore $k^*\le\zeta+1<\infty$ and by Theorem \ref{theorem:rec trans Z} the rotor walk is transient a.s.
\end{proof}

\begin{remark}
 By considering right excursions and left excursions separately, the theorems in this chapter gives corresponding results on rotor walks on $\Z$.
 In particular, for a periodic sequence, choosing each one of the shifts with positive probability, we get that the rotor walk on $\Z$ is recurrent a.s.\ if the sequence is balanced, transient to the right a.s.\ if there are strictly larger numbers of $+1$'s than $-1$'s
 per period, and transient to the left a.s.\ otherwise.
\end{remark}

\section{\texorpdfstring{The $d$-ary tree, $d\geq 2$}{The regular tree}}\label{sec:d-arry}
In this section we consider rotor walks on the homogeneous tree $\T_{d}, d\geq 2$. We give a criterion for recurrence and transience, see Theorem \ref{theorem:bal rc dary}. This criterion is based on
generalizations of the $U$-functions defined in the previous section and the fact that
transience of the rotor walk is equivalent to survival of a multi-type Galton-Watson process, as we shall see below.
We then show  that the uniform rotation configuration may be recurrent or transient on $\T_{2}$ and classify the recurrent rotor sequences, see Theorem \ref{thm:iidrotationT2}.
In higher dimensions, $\T_{d}, d\geq 3$, the uniform rotation configuration is always  transient, see Theorem \ref{thm:iidrotationTd}.
We conjecture that a similar behavior holds also for the shift configuration, see Conjectures \ref{conj:shiftbinarytree} and \ref{conj:shiftdarytree}.

\subsection{Notations}
For a rotor sequence $\aseq$ on the homogeneous  tree $\T_{d}, d\geq 2$, i.e.\ $\aseq$ is taking values in $\{0,1,2,\ldots, d\}^{\N}$, we define the $U$-functions as follows:
\[
U_{\aseq}(x)=
\left(
\begin{array}{c}
 U_{\aseq}^{(1)}(x)    \\
  U_{\aseq}^{(2)}(x) \\
  \vdots\\
  U_{\aseq}^{(d)}(x) \\
  \end{array}
\right),
\]
where $U_{\aseq}^{(i)}(x)$ is the number of $i$'s that appear prior to $x$ $0$'s in the sequence $\aseq$.
For a finite sequence of rotors $\{a_1,\ldots,a_\ell\}$ of length $\ell$ we write $\period{a_1,\ldots,a_\ell}$ for the periodic rotor sequence $(a_1,\ldots, a_\ell, a_1,\ldots, a_\ell,\ldots)$.
\begin{example}[rotor-router walk]\label{ex:standrad rr binary tree1}
On $\T_{2}$ let $\aseq^{(1)}=\period{0,1,2}$, $\aseq^{(2)}=\period{1,2,0}$ and $\aseq^{(3)}=\period{2,0,1}$. The corresponding $U$-functions are given by
\[
U_{1}(x)=\left(
\begin{array}{c}
  x-1     \\
  x-1
\end{array}
\right),
U_{2}(x)=\left(
\begin{array}{c}
  x     \\
  x
\end{array}
\right)\mbox{ and }
U_{3}(x)=\left(
\begin{array}{c}
  x-1     \\
  x
\end{array}
\right).
\]
\end{example}

\begin{example}\label{ex:second on binary1}
On $\T_{2}$ we consider $\aseq^{(1)}=\period{0,1,0,1,2,2}, ~\aseq^{(2)}=\period{1,2,1,2,0,0}$ and $\aseq^{(3)}=\period{2,0,2,0,1,1}$. The corresponding $U$-functions are given by
\[
U_{1}(x)=\left(
\begin{array}{c}
  x-1     \\
  2 \lfloor \frac{x-1}{2} \rfloor
\end{array}
\right),
U_{2}(x)=\left(
\begin{array}{c}
   2 \lceil \frac{x-1}{2} \rceil    \\
   2 \lceil \frac{x-1}{2} \rceil
\end{array}
\right)\mbox{ and }
U_{3}(x)=\left(
\begin{array}{c}
   2 \lfloor \frac{x-1}{2} \rfloor      \\
   2 \lceil \frac{x-1}{2} \rceil
\end{array}
\right).
\]
\end{example}


Given $(\aseq_{v})_{v\in \T_{2}}$ we define a process $Z^k$ on the tree $\T_{2}$  by the following: $Z^k_o:=k$, and inductively for  $v=\fils{\pere{v}}{i}, i\in\{1,\ldots,d\}$,
\begin{equation}\label{eq:def Zkv}
Z^k_{v}= U_{\aseq_{v_{0}}}^{(i)}(Z_{\pere{v}}^{k}).
\end{equation}
Recall that $v=\fils{\pere{v}}{i}$ means that $v$ is the $i$th child of its parent $v_{0}$. 
This is a generalization of \eqref{eq:def_of_Z_on_Unary}. 
In words, $Z_{o_{i}}^{k}$ is the number jumps from $o$ to $o_{i}$ before the $k$th traversal of the loop in $o$, 
that is before the end of the $k$th excursion.
Inductively, $Z_{v}^{k}$ is the number of times the vertex $v_{0}$ is visited coming from its children $v$ before the 
$k$th traversal of the loop in the root. 

Similarly to Lemma \ref{lem:rec_rw} in the case of the unary tree, the following Lemma is a consequence of Proposition 3.4 in \cite{BS2009recurrence}, together with the deterministic point of view of Chapters 2.2 and 2.3 of \cite{ABO}. 

\begin{lemma}\label{lemma:rec_brw}  The first $k$th excursions of $X$ are finite if and only if $Z^k_v>0$ for only a finite number of vertices $v$.
\end{lemma}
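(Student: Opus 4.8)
The plan is to establish the equivalence in Lemma \ref{lemma:rec_brw} by interpreting the process $Z^k$ on $\T_2$ as bookkeeping the net edge-crossings of the first $k$ excursions, exactly as in the unary case (Lemma \ref{lem:rec_rw}), and then translating this into the statement that finiteness of all $k$ excursions is equivalent to the absence of an infinite surviving subtree of positive $Z^k$-values. First I would make precise the informal description given right before the statement: for a vertex $v$ that is the $i$th child of $v_0$, the quantity $Z_v^k$ counts the number of times the walk crosses the edge $(v, v_0)$ upward (from $v$ to $v_0$) during the first $k$ excursions. The recursion \eqref{eq:def Zkv} is then justified exactly as in the unary case: the number of upward crossings of $(v,v_0)$ equals the number of $i$-valued rotors consumed at $v_0$ before the $Z_{v_0}^k$-th $0$-rotor (the $0$-rotor being the one that sends the walk back up from $v_0$ to its own parent), which is precisely $U^{(i)}_{\aseq_{v_0}}(Z^k_{v_0})$.

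The key steps, in order, are as follows. First, I would argue the forward implication: if the first $k$ excursions are finite, then the walk uses only finitely many rotors in total, so it visits only finitely many vertices; hence $Z_v^k>0$ only for finitely many $v$ (indeed $Z_v^k=0$ for every vertex the walk never reaches, and for every vertex that is eventually abandoned with its upward edge never re-crossed). Second, for the converse I would argue the contrapositive: if some excursion among the first $k$ is infinite, then by König's lemma the set $\{v : Z_v^k>0\}$ contains an infinite path from the root (the infinite excursion eventually leaves every finite ball, so it crosses infinitely many edges on a single ray away from $o$), giving infinitely many $v$ with $Z_v^k>0$. The cleanest way to package both directions is to invoke Proposition 3.4 of \cite{BS2009recurrence} together with the deterministic arrow/cookie correspondence of Chapters 2.2–2.3 of \cite{ABO}, exactly as the paper signals; the process $Z^k$ is the multi-type Galton–Watson-type process attached to the excursion structure, and its survival (an infinite set of positive values) is by that proposition equivalent to the existence of an infinite excursion.

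The main conceptual point, and the step I expect to require the most care, is the precise correspondence between ``$Z^k_v>0$ for infinitely many $v$'' and ``survival'' in the sense of the Galton–Watson description of \cite{BS2009recurrence}: one must verify that the infinitely many positive values indeed organize into an infinite connected ray rather than, say, infinitely many isolated positive values on disjoint finite branches. This is where the tree structure and the monotonicity of the $U$-functions (Lemma \ref{lem:subduality}(1), generalized coordinatewise to the $U^{(i)}$'s) are used: since $Z^k_v$ is obtained from $Z^k_{v_0}$ by a monotone nonnegative map and $Z^k_v>0$ forces $Z^k_{v_0}>0$, the support $\{v:Z^k_v>0\}$ is a subtree containing $o$, so it is infinite if and only if it contains an infinite ray by König's lemma. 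I would therefore present the proof as: (i) identify $Z^k_v$ with the net upward edge-crossing count and verify the recursion; (ii) observe that the support is a subtree rooted at $o$; (iii) conclude via König's lemma and Lemma \ref{lem:rec_rw}'s tree analogue that an infinite excursion exists if and only if this subtree is infinite, which is the claimed equivalence.
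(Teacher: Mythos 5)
Your proposal is correct and follows essentially the same route the paper takes: the paper offers no written proof beyond the informal identification of $Z^k_v$ with edge-crossing counts and the appeal to Proposition 3.4 of \cite{BS2009recurrence} together with the deterministic viewpoint of \cite{ABO}, which is exactly what you flesh out (with the additional, correct observation that the support of $Z^k$ is a subtree rooted at $o$, so K\H{o}nig's lemma converts ``infinite support'' into ``infinite ray''). The only nitpick is the parenthetical in your forward direction: a vertex the walk does reach necessarily has $Z^k_v\ge 1$, so the clause about ``abandoned'' vertices is superfluous, but this does not affect the argument.
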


Let us expand on the relation of $Z^{k}_{v}$ to a multi-type Galton-Watson process.  
The \emph{type} of a vertex $v$ is defined as $Z^{k}_{v}$. Given the rotor sequence $\aseq_{v}(\cdot)$ the types of $\fils{v}{i}, i\in\{1,\ldots,d\},$ are defined deterministically by Equation \eqref{eq:def Zkv}.
In the finitely supported distribution model, see Definition \ref{def:general}, we can give a more probabilistic description. The starting point of the construction is the $d$-ary tree $\T_{d}$, which itself can be seen as the genealogical tree of the branching process where each particle (vertex) has a.s.\ $d$ offspring particles (vertices).   To each vertex $v$ in $\T_{d}$ we assign inductively a type. To start let $k$ be the type of the root $o$. In other words,
we start the branching process with one particle of type $k$ at time $0$. Now, we choose a rotor configuration for the root at random (according to $p$) and the types of the children
$\fils{o}{1},\ldots \fils{o}{d}$ of $o$ are given by the values $Z_{\fils{o}{1}}^{k},\ldots, Z_{\fils{o}{d}}^{k}$ following Equation \eqref{eq:def Zkv}.
If the type of a particle is $0$ we declare the particle and all its descendants as dead. By induction this procedure  either continues until all particles are dead, i.e.\ the process dies out, or continues indefinitely,
i.e.\ the process survives. We denote by $\xi_{n}^{k}(i)$ the number of particles of type $i$ at time $n$ and write $\xi^{k}$ for the whole branching process. Due to the definition of
$\xi^{k}$ we have that $\xi^{k}$ dies out if and only if $Z_{v}^{k}>0$ for only a finite number of $v$.

For $v\in\T_d$ we denote by $|v|$ the level of $v$, i.e.\ its graph distance from the root $o$. The next lemma guarantees that in order to prove transience of the rotor walk for i.i.d.\ configurations it is sufficient that the process  $Z^{k}_{v}$ survives with positive probability for some $k$.
\begin{lemma}[]\label{lem:Positive probability for escaping suffices for transience}
Assume that the rotor configuration is i.i.d. Then, $$\pr_o[X_n\ne o ~\forall n>0]>0 \quad \Longrightarrow \quad \pr_o[X_n\ne o \text{ for all $n$ large enough}]=1.$$
\end{lemma}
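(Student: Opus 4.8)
The plan is to translate the statement into the language of the multi-type Galton-Watson process $\xi^k$ described above and to analyze the survival probability $q(k):=\pr[\xi^k \text{ survives}]$ as a function of the root type $k$. First I would record the two inclusions that link the walk to the branching process. By Lemma \ref{lemma:rec_brw} with $k=1$, the event that the first excursion of $X$ is infinite coincides with $\{\xi^1 \text{ survives}\}$; since $\{X_n\ne o\ \forall n>0\}$ is contained in this event (a walk that never revisits $o$ certainly never closes its first excursion), the hypothesis yields $q(1)>0$. On the other side, the event $\{X_n\ne o \text{ for all $n$ large enough}\}$ is exactly transience, which by Lemma \ref{lemma:rec_brw} equals $\bigcup_{k\ge 1}\{\xi^k \text{ survives}\}$.

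Next I would exploit monotonicity. Since each coordinate $U_{\aseq}^{(i)}$ is non-decreasing in its argument (immediate from its definition as the number of $i$'s before $x$ many $0$'s), an induction on the levels of $\T_d$ shows that $Z^k_v$ is non-decreasing in $k$ at every vertex $v$. Hence, under the natural coupling that uses one rotor configuration for all $k$, the events $\{\xi^k \text{ survives}\}$ are increasing in $k$, so $\pr[X_n\ne o \text{ eventually}]=\lim_{k\to\infty}q(k)=:L$ and $L\ge q(1)>0$. It therefore suffices to prove $L=1$.

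The heart of the argument is the first-generation recursion. Conditioning on the rotor sequence $\aseq_o$ at the root and using that the subtrees hanging at $\fils{o}{1},\dots,\fils{o}{d}$ carry i.i.d.\ configurations independent of $\aseq_o$---so that, given the child types $Z^k_{\fils{o}{i}}=U^{(i)}_{\aseq_o}(k)$, each subtree is a fresh copy of the process started from that type---I would write
\begin{equation*}
1-q(k)=\ev_{\aseq_o}\Big[\prod_{i=1}^d \big(1-q\big(U^{(i)}_{\aseq_o}(k)\big)\big)\Big],
\end{equation*}
with the convention $q(0)=0$. Letting $k\to\infty$, the non-degeneracy of $\aseq_o$ (which holds a.s.) forces $U^{(i)}_{\aseq_o}(k)\to\infty$, and since $q$ increases to $L$ this gives $q\big(U^{(i)}_{\aseq_o}(k)\big)\to L$ pointwise; the integrand is bounded by $1$, so bounded convergence yields $1-L=(1-L)^d$. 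Writing $u=1-L\in[0,1]$, the equation $u=u^d$ with $d\ge 2$ forces $u\in\{0,1\}$, and $L\ge q(1)>0$ excludes $u=1$. Hence $u=0$, i.e.\ $L=1$, which is the claim.

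I expect the main obstacle to be the careful justification of the first-generation recursion: one must argue that, conditionally on the child types, the $d$ subtrees are independent copies of $\xi$ started from those types. This is precisely where the i.i.d.\ hypothesis on the configuration is used, and it is the only place it enters. The remaining analytic steps---the monotone passage to the limit and solving the scalar fixed-point equation---are routine once the recursion and the identification of the two events are in place.
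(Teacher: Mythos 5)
Your proof is correct, but it takes a genuinely different route from the paper's. The paper adapts Lemma 8 of Kosygina--Zerner and argues directly on the walk: it tracks the record levels reached by successive finite excursions, notes that each new record exposes a fresh i.i.d.\ configuration from which the walk escapes forever with the fixed probability $p=\pr_o[X_n\ne o\ \forall n>0]>0$ independently of the past, deduces that the number of records is stochastically dominated by a geometric variable and hence a.s.\ finite, and concludes that recurrence would force a finite range, which non-degeneracy forbids. You instead pass entirely to the branching process $\xi^k$: monotonicity of $Z^k_v$ in $k$ identifies $\pr_o[\text{transience}]$ with $L=\lim_k q(k)$ via Lemma \ref{lemma:rec_brw}, and the first-generation recursion $1-q(k)=\ev\bigl[\prod_{i=1}^d\bigl(1-q(U^{(i)}_{\aseq_o}(k))\bigr)\bigr]$, together with $U^{(i)}_{\aseq_o}(k)\to\infty$ (non-degeneracy) and bounded convergence, yields $1-L=(1-L)^d$, so $L\in\{0,1\}$, and $L\ge q(1)>0$ forces $L=1$. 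Both arguments are sound. Yours is arguably the more natural one given that $\xi^k$ and Lemma \ref{lemma:rec_brw} are already in place at this point of the paper, and it isolates precisely where the i.i.d.\ hypothesis enters (conditional independence of the $d$ subtrees given $\aseq_o$); the recursion itself does deserve the careful justification you flag. The paper's argument is softer and more portable: it uses nothing about the tree beyond the notion of level, so it applies verbatim when $d=1$ and under weaker independence assumptions, whereas your fixed-point equation $u=u^d$ is vacuous for $d=1$ and genuinely needs $d\ge 2$ --- which is the standing assumption of this section, so no harm is done here.
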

\begin{proof} The proof is an adaptation of the proof of  \cite[Lemma 8]{kosygina2008positively}.  Let $K_i\in\N\cup\{\infty\}$ be the largest level passed in the $i$th excursion, defined to be $\infty$ if either the excursion is infinite,
or if $K_{i-1}=\infty$. Let $J\subset\N$ be
the set of all `tanned' indices, i.e.\ indices $j$ such that $K_j<\infty$ and $K_j>K_i$ for all $i<j$. Denote $p=\pr_o[X_n\ne o ~\forall n>0]>0$.
 By the i.i.d.\ assumption, for every $j\in J$ we have $\pr_{|X_0|=j}[|X_n|> j ~\forall n>0]=p>0$ independently of the past of the walk.
Therefore, $J$ is stochastically dominated by a geometric random variable (with parameter $p$) and therefore a.s.\ finite. This implies that on the event $\{X_n=o \text{ i.o.}\}$
the range $\{X_{n}, n\in \N\}$ is finite. By non-degeneracy, the latter happens with probability zero. Therefore $\pr_o[X_n=o \text{ i.o.}]=0$.
\end{proof}

\subsection{Periodic balanced configurations}\label{subsec:Periodic balanced rotor configurations}
Let us begin with an immediate but important property of periodic balanced rotor sequences, generalizing Lemma \ref{lemma:prop_U} for any $d$-ary tree.

\begin{lemma}\label{lem:per bal bound U on tree}
Let $\aseq$ be a periodic and balanced rotor configuration $\aseq$ with period $L=(d+1)N$ and let $U_{\aseq}=(U_{\aseq}^{(1)},\ldots, U_{\aseq}^{(d)}) $ be its $U$-function. Then,
for $x=\alpha N + \beta$ where $\alpha\in\N$, $1\leq \beta\leq  N$ and for $i\in\{1,\ldots, d\}$ we have that
\begin{equation*}
U_{\aseq}^{(i)}(x)=\alpha N + U_{\aseq}^{(i)}(\beta) \mbox{ and }  U_{\aseq}^{(i)}(\beta)\leq N.
\end{equation*}
\end{lemma}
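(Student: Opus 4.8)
The plan is to argue directly from the definition of the vector $U$-function together with the two structural features of $\aseq$: its $L$-periodicity with $L=(d+1)N$, and the fact that each symbol in $\{0,1,\ldots,d\}$ — in particular each of $0$ and the fixed $i$ — occurs exactly $N$ times per period. The key auxiliary quantity is the position $T_\aseq(x)$ of the $x$-th zero in $\aseq$ (the natural $d$-ary analogue of \eqref{def:T_functions}), so that $U_\aseq^{(i)}(x)$ counts the occurrences of the symbol $i$ among the first $T_\aseq(x)$ entries of $\aseq$.

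First I would locate the $(\alpha N+\beta)$-th zero. Since each period contains exactly $N$ zeros, the zeros split across periods in blocks of $N$: under the constraint $1\le\beta\le N$, the $(\alpha N+\beta)$-th zero is precisely the $\beta$-th zero of the $(\alpha+1)$-st period, so by periodicity its position is $T_\aseq(\alpha N+\beta)=\alpha L+T_\aseq(\beta)$. The additivity identity then follows by splitting the count of $i$'s over positions $1,\ldots,\alpha L+T_\aseq(\beta)$ into the first $\alpha$ complete periods and the remaining segment. The complete periods contribute exactly $\alpha N$ occurrences of $i$ by balance, while by $L$-periodicity the occurrences of $i$ in the final segment of positions $\alpha L+1,\ldots,\alpha L+T_\aseq(\beta)$ equal those in positions $1,\ldots,T_\aseq(\beta)$, which is $U_\aseq^{(i)}(\beta)$. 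This yields $U_\aseq^{(i)}(x)=\alpha N+U_\aseq^{(i)}(\beta)$.

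For the bound $U_\aseq^{(i)}(\beta)\le N$ with $1\le\beta\le N$, note that the $\beta$-th zero lies within the first period, so $T_\aseq(\beta)\le T_\aseq(N)\le L$. Hence all the $i$'s counted by $U_\aseq^{(i)}(\beta)$ sit among the first $L$ entries of $\aseq$, which contain exactly $N$ copies of $i$; the bound follows.

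I expect essentially no genuine obstacle here: the whole statement is a bookkeeping consequence of periodicity together with balance, and is the exact $d$-ary transcription of the unary case (Lemma \ref{lemma:prop_U}). The only point requiring a little care is the off-by-one in the phrase ``prior to $x$ zeros'' in the definition of $U_\aseq^{(i)}$, which is harmless for $i\ge 1$ because the $x$-th zero itself is never a copy of $i$; I would make sure the block decomposition in the second paragraph is written so that this boundary entry is correctly accounted for.
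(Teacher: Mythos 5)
Your proof is correct, and it is exactly the bookkeeping the paper has in mind: the paper states this lemma without proof as an ``immediate'' consequence of periodicity and balance (as it does for its unary counterpart, Lemma~\ref{lemma:prop_U}), and your decomposition via $T_\aseq(\alpha N+\beta)=\alpha L+T_\aseq(\beta)$ plus the per-period count of $N$ copies of each symbol is the intended argument. Your remark on the harmless off-by-one in ``prior to $x$ zeros'' is also accurate, since the $x$-th zero is never an occurrence of $i\ge 1$.
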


A rotor configuration $\{\aseq_v\}_{v\in\T_d}\in\CA^{\T_d}$ is called $L$-periodic (balanced) if all the rotor sequences $\aseq_v,\,v\in\T_d$ are $L$-periodic (balanced).

We consider the finitely supported distribution model, see Definition \ref{def:general}. A consequence of Lemma \ref{lem:per bal bound U on tree} is that, in the periodic and balanced setting, $Z^{k}_{v}$ can be seen as a finite-type Galton-Watson process.
In fact, let  $k=\alpha N + \beta$ where $\alpha\in\N$, $1\leq \beta\leq  N$ and for $i\in\{1,\ldots, d\}$, then $\xi_{n}^{k}(y)=0$ (a.s.) for all $n\in \N$ and all $y>(\alpha+1)N$.

\begin{theorem}[Periodic balanced rotor configuration on  $\T_{d}, {d\geq 3}$]\label{theorem:bal rc dary}
In the above setting the rotor walk is transient a.s.\ if and only if the process $\xi^{N}$ survives with positive probability.
\end{theorem}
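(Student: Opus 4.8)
The plan is to prove both directions of the equivalence between transience of the rotor walk and survival of the branching process $\xi^N$. The key conceptual point, already established by Lemma \ref{lemma:rec_brw}, is that the first $k$ excursions of $X$ are finite if and only if $Z^k_v > 0$ for only finitely many vertices $v$, which by the construction of $\xi^k$ is equivalent to $\xi^k$ dying out. So transience of the walk means some excursion is infinite, i.e. $\xi^k$ survives for some $k$. The task is therefore to show that survival of $\xi^N$ with positive probability is equivalent to this ``some $k$'' condition. First I would fix notation: writing $k = \alpha N + \beta$ with $\alpha \in \N$ and $1 \le \beta \le N$, I would exploit the remark made just before the theorem (a consequence of Lemma \ref{lem:per bal bound U on tree}) that $\xi^k_n(y) = 0$ for all $y > (\alpha+1)N$, so that the process genuinely has finitely many types and standard multi-type Galton-Watson theory (cf.\ \cite{AN:72, Ha:63}) applies.

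For the easy direction, suppose $\xi^N$ survives with positive probability. Then by Lemma \ref{lemma:rec_brw} the $N$th excursion is infinite with positive probability, so $\pr_o[X_n \ne o~\forall n>0]>0$, and Lemma \ref{lem:Positive probability for escaping suffices for transience} upgrades this to transience almost surely. This uses only results already in the excerpt. The substance is in the converse: assuming the rotor walk is transient, I must show $\xi^N$ survives. Transience gives some $k$ for which $\xi^k$ survives with positive probability, so the real work is a \emph{monotonicity/comparison} argument reducing the type $k$ to the base type $N$. The central step is to establish that survival of $\xi^k$ for some $k$ forces survival of $\xi^N$. The plan is to use the additive structure from Lemma \ref{lem:per bal bound U on tree}, namely $U^{(i)}_\aseq(\alpha N + \beta) = \alpha N + U^{(i)}_\aseq(\beta)$, together with monotonicity of the $U^{(i)}$ in $x$ (Lemma \ref{lem:subduality}(1) and its tree analogue). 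The point is that a type-$k$ particle with $k = \alpha N + \beta$ behaves, after subtracting off the ``$\alpha N$'' drift that is preserved additively, like a type-$\beta$ particle plus a guaranteed surplus; I would argue that the component of the process tracking types in $\{1,\dots,N\}$ dominates a copy of $\xi^N$ (or $\xi^\beta$), so if the larger type survives then so must $\xi^N$.

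I would make the comparison precise by coupling: on the common $d$-ary genealogical tree, use the same i.i.d.\ rotor configuration to run $\xi^k$ and $\xi^N$ simultaneously, and show by induction on the level $|v|$ that $Z^k_v \ge Z^N_v$ whenever $Z^N_v > 0$, which follows because $Z^k_o = k \ge N = Z^N_o$ and the map $x \mapsto U^{(i)}_\aseq(x)$ is monotone nondecreasing, so monotonicity propagates down the tree through \eqref{eq:def Zkv}. Consequently the survival set of $\xi^N$ is contained in that of $\xi^k$; but I need the reverse implication, so instead I would argue that survival of $\xi^k$ implies survival of $\xi^N$ via the additive reduction: the ``excess'' type $\alpha N$ can only help, and by the periodic-balanced bound a surviving type-$k$ line projects onto a surviving type-$N$ (equivalently type-$\beta$, and then type-$N$ by monotonicity since $\beta \le N \le N$) line. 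Combined with the irreducibility of the finite-type process restricted to $\{1,\dots,N\}$ (positive support $p$ ensures every reachable type can be reached), this yields that $\xi^N$ survives with positive probability.

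The main obstacle I anticipate is the converse reduction from an arbitrary surviving type $k$ down to the canonical type $N$. The additive identity $U^{(i)}(\alpha N + \beta) = \alpha N + U^{(i)}(\beta)$ handles the ``$\alpha N$'' shift cleanly, but one must be careful that survival is genuinely governed by the behaviour on the residue types $\{1,\dots,N\}$ rather than being an artifact of the large initial value $k$; the delicate case is when $Z^k_v$ decreases by multiples of $N$ along a line and could in principle vanish. I would resolve this by the guaranteed-surplus observation: so long as the type stays above $N$ the additive term keeps the type strictly positive, and once it drops into $\{1,\dots,N\}$ it is exactly a type-$\beta'$ particle for some $1 \le \beta' \le N$, at which point monotonicity ($\beta' \le N$) and finite-type irreducibility let me compare against $\xi^N$. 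Verifying that this residue process is irreducible under the strictly positive distribution $p$, and that positive-probability survival is preserved under the comparison, is the part requiring the most care.
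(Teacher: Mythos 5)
Your proposal follows essentially the same route as the paper: the easy direction is Lemma \ref{lemma:rec_brw} plus Lemma \ref{lem:Positive probability for escaping suffices for transience}, and the hard direction is the reduction of an arbitrary starting type $k=\alpha N+\beta$ to type $N$ via the additive identity of Lemma \ref{lem:per bal bound U on tree} together with monotonicity of the $U$-functions. The only real difference is presentational: the paper runs the reduction in the contrapositive (if $\xi^{N}$ dies out a.s., then each time the residue process dies one block of $N$ is peeled off, so after $\alpha$ rounds all particles have type at most $N$ and $\xi^{k}$ dies out a.s.), which makes your appeal to irreducibility of the type set $\{1,\ldots,N\}$ unnecessary --- pathwise monotonicity $Z^{\beta'}_v\le Z^{N}_v$ already suffices, which is just as well since, as remarked before the theorem, positive regularity can fail in this setting.
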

\begin{proof}
Due to Lemma \ref{lemma:rec_brw} and the discussion above it remains  to prove that the process $\xi^{k}$ dies out a.s.\  for all $k$ if and only if it dies out a.s.\ for  $k=N$.
Let $k=\alpha N +\beta$ with $1\leq \beta \leq N$ and assume that the process  $\xi^{N}$  dies out a.s. The latter together with Lemma \ref{lem:per bal bound U on tree} implies
that a.s.\ there exists a (random) level $n_{1}$ such that all particles at generation $n_{1}$ have a type of at most $\alpha_{1} N +\beta_{1}$ with $\alpha_{1}=\alpha-1$
and $1\leq \beta_{1} \leq N$. By induction, a.s.\ there exists a (random) level $n_{\alpha}$ such that all vertices will have type of at most $N$. By the assumption that
$\xi^{N}$  dies out a.s.\ all of these particles will a.s.\ have only a finite number of descendants.
\end{proof}

It is well known, see e.g.\ \cite[Chapter V]{AN:72} that survival of a multi-type Galton-Watson process only depends on the first moment matrix $M$ of
the process. Usually one assumes the  multi-type Galton-Watson process to be non-singular and positive regular. A branching process is called singular if every particle has exactly one offspring. In this paper all non-trivial examples 
give rise to non-singular processes, particularly in the finitely supported distribution model with $|S|>1$.  Positive regularity, i.e.\ strict 
positivity of the first moment matrix, is a condition that is not always satisfied in our cases. However, using standard theory of non-negative matrices, e.g.\ \cite[Chapter 1]{Sen:06}, it follows that the multi-type branching processes $\xi^{N}$ defined above will survive with positive probability if and only if the  largest eigenvalue of the first moment matrix  is strictly larger than $1$. 

In our case the first moment matrix is given as follows. Assume that $v$ has type $k$, then the mean number $m(k,\ell)$ of offspring of $v$ of type $\ell$ is given by
\begin{equation}\label{def:M}
m(k,\ell)=\sum_{i=1}^{|S|} p_{i} \left( \mathbf{1}_{U_{\aseq_{i}}^{(1)}(k)=\ell} + \mathbf{1}_{U_{\aseq_{i}}^{(2)}(k)=\ell}+\cdots + \mathbf{1}_{U_{\aseq_{i}}^{(d)}(k)=\ell}\right).
\end{equation}
Denote its largest eigenvalue $\rho=\rho(M)$ then the process $\xi^{k}$ survives with positive probability if and only if $\rho>1$.
Together with Theorem \ref{theorem:bal rc dary} this gives a useful criterion for recurrence and transience. We now illustrate how to utilize the theorem by giving a few examples.
We note that Theorem \ref{thm:iidrotationT2} covers parts of the examples below.

\begin{example}[rotor-router walk, $\period{0,1,2}$]\label{ex:standardRRW}
In the model given in Example \ref{ex:standrad rr binary tree1} we choose each configuration with probability $1/3$.
We start the rotor walk with $k=N=1$ particle or equivalently we start the ($1$-type) Galton-Watson process $\xi^{1}$ with one particle of type $1$. The mean number of offspring of the Galton-Watson process is
\begin{equation*}
m=\frac13 (0+0) + \frac13 (1+1) +\frac13 (0+1)=1.
\end{equation*}
Hence, the process $\xi^{1}$ a.s.\ dies out and the rotor walk is a.s.\ recurrent.
Now, more generally we choose $\aseq^{(1)}=\period{0,1,2}$ with probability $p_1$, $\aseq^{(2)}=\period{1,2,0}$ with probability $p_2$, and $\aseq^{(3)}=\period{2,0,1}$ with probability $p_3$.
For mean number of offspring we obtain $2p_2+p_3$. Hence, the rotor walk is recurrent if and only if $2p_2+p_3\leq 1$.
\end{example}
\begin{example}[$\period{0,1,0,1,2,2}$]\label{010122}
We analyze further Example \ref{ex:second on binary1} where we choose each configuration with equal probability. We start the rotor with $k=N=2$ particles or equivalently we start the $2$-type Galton-Watson process $\xi^{2}$ with one particle of type $2$. We obtain the following first moment matrix:
\begin{equation*}
M=
\left(
\begin{array}{cc}
 1/3 (1+0+0)  & 1/3(0+2+0)      \\
  1/3 (1+0+0)  & 1/3(0+2+1)
\end{array}
\right)=
\left(
\begin{array}{cc}
 1/3   & 2/3     \\
  1/3   & 1
\end{array}
\right).
\end{equation*}
As the largest eigenvalue of $M$ is $\frac13(2+\sqrt{3})>1$ the rotor walk is a.s.\ transient.
We shall see in the next chapter that Example \ref{010122} also serves as an example for a more specific criterion for transience vs.\ recurrence for uniform rotations.
\end{example}
To end the chapter we analyze the recurrence vs.\ transience regimes when we change the probabilities in  Example \ref{010122}.

\begin{example}\label{010122gen}
Choose $\aseq^{(1)}=\period{0,1,0,1,2,2}$ with probability $p_1$, $\aseq^{(2)}=\period{1,2,1,2,0,0}$ with probability $p_2$,
and $\aseq^{(3)}=\period{2,0,2,0,1,1}$ with probability $p_3$. We obtain the first moment matrix
\begin{equation*}
M=
\left(
\begin{array}{cc}
 p_3 & 2p_2     \\
 p_1 & 2p_2+p_3
\end{array}
\right)
\end{equation*} with largest eigenvalue equal to $p_2+p_3+\sqrt{p_2 (2 p_1+p_2)}$. 
\end{example}

\subsection{Criterion for uniform rotations on the binary tree}
Let $\aseq$ be a rotor sequence.
We call a finite rotor sequence a \emph{piece}. We say that
\begin{equation*}
\alpha=(\underbrace{0,\ldots,0}_{m},\underbrace{1,\ldots,1}_{m},\underbrace{2,\ldots,2}_{m})
\end{equation*} and each of its rotations $\pi^i \alpha$, $i\in\{0,1,2\}$, are \emph{$m$-standard pieces}. A piece is called \emph{standard} if it is $m$-standard for some $m$.
A sequence $\aseq$ is the \emph{concatenation} of standard pieces if $\aseq=(\alpha_{1},\alpha_{2},\ldots)$ for some standard pieces $\alpha_{1},\alpha_{2}\ldots$.
\begin{theorem}[Criterion for uniform rotation]\label{thm:iidrotationT2} Fix a rotor sequence $\aseq$ on $\T_{2}$.
The rotor walk in the uniform rotation model corresponding to $\aseq$ is recurrent a.s.\ if and only if $\aseq$ is a concatenation of standard pieces.
\end{theorem}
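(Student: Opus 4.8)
The plan is to connect the recurrence/transience of the uniform rotation model on $\T_2$ to the multi-type Galton-Watson framework developed in Section \ref{sec:d-arry}, and to analyze the structure of the associated $U$-functions. The uniform rotation model chooses one of the three rotations $\{\aseq,\pi\aseq,\pi^2\aseq\}$ uniformly at each vertex, so by Lemma \ref{lemma:rec_brw} and the branching-process description (Lemma \ref{lem:Positive probability for escaping suffices for transience}), transience is equivalent to survival of the process $Z^k_v$ for some $k$, governed by the first-moment matrix of type \eqref{def:M}. The crux will be to show that ``$\aseq$ is a concatenation of standard pieces'' is exactly the combinatorial condition that forces the relevant spectral radius (or the appropriate growth quantity of $Z^k$) to sit at the critical threshold $1$ rather than above it.

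\textbf{Recurrence direction (standard $\Rightarrow$ recurrent).} First I would suppose $\aseq=(\alpha_1,\alpha_2,\ldots)$ is a concatenation of standard pieces, where each $\alpha_j$ is an $m_j$-standard piece of the form $\pi^{i}(\underbrace{0\cdots0}_{m}\underbrace{1\cdots1}_{m}\underbrace{2\cdots2}_{m})$. The key observation is that within a single standard piece, the three rotations $\aseq,\pi\aseq,\pi^2\aseq$ distribute the symbols $0,1,2$ with perfect symmetry: over one piece each of the three rotations ``uses up'' its $0$-rotors, $1$-rotors, and $2$-rotors in blocks. I would compute the $U$-functions $U_1,U_2,U_3$ for the three rotations directly at the block boundaries (as in Examples \ref{ex:standrad rr binary tree1} and \ref{ex:second on binary1}, which are themselves standard-piece concatenations) and verify that, averaged over the uniform choice, the expected total offspring count is exactly $1$ at every type $k$ of the form $\alpha N+\beta$. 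Concretely, the standard-piece structure should yield $\sum_{i}\tfrac13\big(U^{(1)}_i(k)+U^{(2)}_i(k)\big)=k$ for all $k$, i.e. the process $Z^k$ is a critical (mean-one) branching-type recursion, so by the classical theory (and the spectral criterion $\rho\le 1$ cited after Theorem \ref{theorem:bal rc dary}) it dies out a.s., giving recurrence by Lemma \ref{lemma:rec_brw}.

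\textbf{Transience direction (not standard $\Rightarrow$ transient).} For the converse I would argue contrapositively: if $\aseq$ is \emph{not} a concatenation of standard pieces, I want to exhibit a ``local'' failure of the mean-one balance that pushes the expected offspring strictly above $1$ for some type $k$, so that $\rho>1$ and the process survives with positive probability (hence transience, via Lemma \ref{lem:Positive probability for escaping suffices for transience}). The natural strategy is to read off the first place where the non-standard structure appears: if $\aseq$ cannot be parsed into standard blocks, then at some prefix the multiset of symbols consumed is ``unbalanced'' in a way that makes one of the rotated $U$-functions jump ahead—precisely the phenomenon making Example \ref{010122} transient with eigenvalue $\tfrac13(2+\sqrt3)>1$. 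I would formalize this by choosing $k$ to be the length of the offending prefix and showing the resulting $3\times3$ (or smaller, after reduction) first-moment matrix has a row sum exceeding one somewhere, forcing $\rho>1$ by Perron–Frobenius monotonicity.

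\textbf{The main obstacle} I anticipate is the converse direction: cleanly characterizing the ``first non-standard block'' and translating it into a quantitative lower bound on the spectral radius. The difficulty is that standardness is a global parsing condition on the whole sequence, whereas the branching process only sees local $U$-function increments; I expect to need a careful induction showing that \emph{any} deviation from the block structure creates, for a suitable type $k$, a rotation whose $U$-function overshoots (i.e. $U^{(j)}_i(k)\ge k$ for enough $(i,j)$), and that these overshoots accumulate across levels rather than cancelling. Making the equivalence exact—that \emph{no} non-standard sequence can accidentally still yield $\rho=1$—will likely require the finest combinatorial bookkeeping, and is where I would spend most of the effort, probably by analyzing the increments of the three rotations simultaneously as a single balanced word and invoking a cycle-lemma-type argument analogous to Lemma \ref{lem:Cycle}.
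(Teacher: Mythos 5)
Your overall framework (multi-type Galton--Watson process, first-moment matrix, spectral radius) coincides with the paper's, but both directions of your argument contain a genuine gap. For the recurrence direction, the identity you propose to verify, namely $\sum_i\tfrac13\bigl(U_i^{(1)}(k)+U_i^{(2)}(k)\bigr)=k$, is false already for the $1$-standard piece $\period{0,1,2}$, where the left-hand side equals $2k-1$ (compute from Example \ref{ex:standrad rr binary tree1}); and the weaker statement that the expected number of offspring of \emph{positive type} equals $1$ for every type also fails for genuine concatenations (for $\aseq=(0,1,2,0,0,1,1,2,2,\ldots)$ the type $k=2$ produces two surviving children under each of the three rotations, so the mean is $2$). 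Criticality here is not a row-sum phenomenon. The paper's proof instead shows that the moment matrix of a single $m$-standard piece is the $m\times m$ matrix whose only nonzero column is the $m$th, equal to the all-ones vector (eigenvalues $0$ and $1$), and that for a concatenation the moment matrix, restricted to the finitely many types needed to decide the first $k$ excursions, is block lower-triangular with these critical blocks on the diagonal: the ``extra'' offspring created across block boundaries land in lower blocks and cannot raise the spectral radius above $1$. This structural observation, resting on the bound \eqref{eq:Ubounds}, is the missing idea; without it the infinitely-many-types issue for non-periodic $\aseq$ is also left unaddressed.

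For the transience direction your key step --- that a row sum of the moment matrix exceeding $1$ forces $\rho>1$ ``by Perron--Frobenius monotonicity'' --- is not true for non-negative matrices (a nilpotent matrix can have arbitrarily large row sums and spectral radius $0$), and since these moment matrices are typically not positive regular you cannot invoke irreducible Perron--Frobenius theory to repair this. The paper closes the gap constructively: it isolates the first non-standard block, which after a rotation reads $(0^r,1^s,2^t,i,\ldots)$ with $r,s,t$ not all equal, replaces the three rotations by minorizing sequences $\cseq_1,\cseq_2,\cseq_3$ that agree with them up to index $r+s$ and point to $0$ afterwards, computes the resulting moment matrix $M_{\cseq}$ explicitly and checks directly that $\rho(M_{\cseq})>1$; entrywise monotonicity of the spectral radius for non-negative matrices then transfers this to the original configuration. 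You correctly identify this direction as the main obstacle, but the mechanism you propose would not close it.
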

\begin{proof}
We shall first check that for any $m$-standard piece $\alpha$ the first moment matrix $M_{\alpha}$ corresponding to $\period{\alpha}$ has spectral radius $1$.
The case $m=1$ gives rise to an $1\times 1$ matrix with entry $1$, see Example \ref{ex:standardRRW}. In the case $m>1$ one checks that the first moment matrix is the  $m\times m $ matrix where all
columns are the $0$-vector except for the $m$th column which is the  $1$-vector. Therefore its eigenvalues are $0$ and $1$.

The general case where $\aseq=(\alpha_{1},\alpha_{2},\ldots,\ldots)$ is a concatenation of standard pieces $\alpha_{i}$ is in the same spirit. However if $\aseq$ is not periodic the multi-type Galton-Watson process $\xi$
may have infinitely many types and the first moment matrix becomes a non-negative operator $M=(m(i,j)_{i,j\in \N}$.
Let $|\alpha_i|$ be the length of $\alpha_{i}$.
 For $k\in \N$ we define
 \begin{equation*}
J(k)=\inf\{j\in \N: \sum_{i=1}^{j} |\alpha_i| \geq k\} \mbox{ and } e(k)= \sum_{i=1}^{J(k)} |\alpha_i|.
\end{equation*}
 Note that $e(k)\ge k$ by definition.
Since all $\alpha_{i}, i\in \N,$ are balanced we have that
\begin{equation}\label{eq:Ubounds}
U_{\aseq}^{(i)}(x)\leq \frac{e(k)}3\mbox{ for all }x\leq \frac{e(k)}3,  i\in\{1,\ldots,d\}.
\end{equation}

Fix $k\in \N$. In order to determine whether the first $k$ excursions are finite a.s.\ it suffices to consider the first moment matrix $M_{k}=(m(i,j))_{i,j\leq e(k)}$ of the multi-type Galton-Watson process with $e(k)/3$ types.
Moreover, Equation (\ref{eq:Ubounds}) implies that $M_{k}$ is a block diagonal matrix consisting of $J(k)$ blocks
$M_{\alpha_{i}}, 1\leq i\leq J(k)$, with additional entries in the lower diagonal part. Since $\rho(M_{\alpha_{i}})=1$ for all $i$ we have that $\rho(M_{k})=1$ and so the first $k$ excursions are finite a.s. Since $k$ was arbitrary, recurrence follows.

Let us now prove that every sequence $\aseq$ that is not a concatenation of standard pieces gives rise to a transient rotor walk. We decompose $\aseq=(\beta,\bseq)$ where $\beta$ is a piece which is a concatenation of standard pieces and
$\bseq$ is a rotor sequence which has, without loss of generality, the form
\begin{equation*}
\bseq=(\underbrace{0,\ldots,0}_{r},\underbrace{1,\ldots,1}_{s},\underbrace{2,\ldots,2}_{t},i,\ldots),\quad i\neq 2.
\end{equation*}
In other words, we have that $s$ and $t$ are not both equal to $r$. We treat here the case where $r<s$. The remaining cases ($r=s\neq t$ and $r>s$) are done analogously. We define $\bseq^{(1)}=\bseq, \bseq^{(2)}=\tau^{1}\bseq, \bseq^{(3)}= \tau^{2}\bseq$, and
\begin{align*}
\cseq_{1}&=(\underbrace{0,\ldots,0}_{r},\underbrace{1,\ldots,1}_{s},0,\ldots),\cr
\cseq_{2}&=(\underbrace{1,\ldots,1}_{r},\underbrace{2,\ldots,2}_{s},0,\ldots),\cr
\cseq_{3}&=(\underbrace{2,\ldots,2}_{r},\underbrace{0,\ldots,0}_{s},0,\ldots).
\end{align*}
That is, $\cseq_{1}, \cseq_{2}, \cseq_{3}$ are the same as $\bseq^{(1)}, \bseq^{(2)},\bseq^{(3)}$ respectively in the indices $1$ to $r+s$ and are $0$ in the larger indices.
The first moment matrix $M_{\cseq}$ of the uniform rotor configuration consisting of $\cseq_{1}, \cseq_{2}$ and $\cseq_{3}$ (that means that each of them chosen with probability $1/3$) is given by
\newcommand\coolleftbrace[2]{%
#1\left\{\vphantom{\begin{matrix}%
#2 \end{matrix}}\right.}
\newcommand\coolrightbrace[2]{%
\left.\vphantom{\begin{matrix}%
#1 \end{matrix}}\right\}#2}
\setcounter{MaxMatrixCols}{11}
\newcommand\coolover[2]{\mathrlap{\smash{%
\overbrace{\phantom{\begin{matrix} #2 %
\end{matrix}}}^{\mbox{$#1$}}}}#2}
\newcommand\coolunder[2]{\mathrlap{\smash{%
\underbrace{\phantom{\begin{matrix} #2 %
\end{matrix}}}_{\mbox{$#1$}}}}#2}
\[
\vphantom{\begin{matrix} &\\ &\\&\\&\\&\\&\\&\\&\\&\\&\end{matrix} }
\frac13
\begin{pmatrix}
\coolover{r}{0 & \cdots & 0 & 2} &0 & \cdots&  0 & 1 &0 &\cdots &0  \cr
\vdots & \ddots  & \vdots &\vdots & \vdots& \ddots&\vdots &\vdots & \vdots  &\ddots & \vdots \cr%
0 & \cdots & 0 & 2 &0 & \cdots &0 & 1 &0 &\cdots &0\cr
0 & \cdots & 0 & 2 &0 & \cdots&  0 &2 &0 &\cdots &0  \cr
\vdots & \ddots  & \vdots &\vdots & \vdots& \ddots&\vdots &\vdots & \vdots  &\ddots & \vdots \cr%
\coolunder{s}{0 & \cdots & 0 & 2 &0 & \cdots &0 & 2} &0 &\cdots &0\cr
\end{pmatrix}%
\begin{matrix}
\coolrightbrace{0 \\ \vdots \\ 0}{r}\\
\coolrightbrace{0 \\ \vdots \\ 0}{s}
\end{matrix}
\]
which  has spectral radius
strictly larger than $1$. Denote by $q$ the length of $\beta$.
The original first moment matrix $M$ restricted to the first $q+r+s+t$-block is minorated by a block diagonal matrix consisting of a first block of size $q$ and a second block which is $M_{\cseq}$.
The fact that $\rho(M_{\cseq})>1$ implies that $\rho(M_\bseq)>1$, hence also $\rho(M)>1$ and the rotor walk is transient a.s.
\end{proof}

\subsection{Transience for uniform rotations on  \texorpdfstring{$\T_{d}, d\geq 3$}{the regular tree}}
The following theorem states that in the uniform rotation model on the $d$-ary tree, $d\ge 3$, the walk is always transient.

\begin{theorem}\label{thm:iidrotationTd}
Let $\aseq$ be a rotor sequence on $\T_{d}, d\geq 3$. Then, the rotor walk in the uniform rotation corresponding to $\aseq$ is transient a.s.\
\end{theorem}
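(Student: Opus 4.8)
The plan is to show that for every rotor sequence $\aseq$ on $\T_d$ with $d\geq 3$, the associated multi-type Galton-Watson process survives with positive probability, which by Lemma \ref{lem:Positive probability for escaping suffices for transience} (and the branching-process reformulation) gives transience a.s. The essential point is that moving from $d=2$ to $d\geq 3$ adds at least one extra child-coordinate to each $U$-function, and this extra ``reproduction'' pushes the relevant spectral radius strictly above $1$ regardless of the structure of $\aseq$. So unlike Theorem \ref{thm:iidrotationT2}, where concatenations of standard pieces sit exactly at spectral radius $1$, here there is no critical balance to maintain and survival is forced.

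First I would reduce to a periodic situation. As in the proof of Theorem \ref{thm:iidrotationT2}, if $\aseq$ is not periodic the first-moment operator $M=(m(i,j))_{i,j\in\N}$ is an infinite non-negative matrix, but survival of $\xi^k$ can be detected on a finite truncation: it suffices to exhibit a finite principal submatrix (a block corresponding to finitely many types) whose spectral radius exceeds $1$, since $M$ is minorated by such a block and $\rho$ is monotone under taking larger non-negative submatrices. Concretely I would look at an initial piece of $\aseq$ long enough to contain a fixed number $N$ of $0$'s, so that all $d+1$ rotations $\pi^j\aseq$ contribute genuine offspring within that window. The key structural input is Lemma \ref{lem:per bal bound U on tree}: restricting to a balanced periodic truncation, $Z^k_v$ is a genuine \emph{finite}-type Galton-Watson process with mean matrix $M$ given by \eqref{def:M}, now with $d$ indicator terms per summand rather than $2$.

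The heart of the argument is the estimate $\rho(M)>1$. I would prove this by a counting/trace argument rather than by diagonalizing. By \eqref{def:M} with $d+1$ equally weighted rotations $\pi^j\aseq$, the total mass of row $k$ is $\sum_\ell m(k,\ell)=\frac{1}{d+1}\sum_{j=0}^{d}\sum_{i=1}^{d}\mathbf{1}_{U^{(i)}_{\pi^j\aseq}(k)\geq 1}$, and because each value in $\{1,\ldots,d\}$ is produced by \emph{some} rotation as a child-direction, the row sums are on average close to $\frac{2d}{d+1}$, which is already $>1$ for $d\geq 2$; but row sums alone do not control $\rho$ for a reducible matrix, so the real work is to certify survival. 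The clean way is to compare with the analogous $d=2$ computation: for any rotor sequence the $d=2$ uniform-rotation process is critical or supercritical (spectral radius $\geq 1$), and the two additional coordinates $U^{(3)},\ldots,U^{(d)}$ add strictly positive off-diagonal mass that cannot be cancelled, so the mean matrix for $d\geq 3$ strictly dominates (entrywise) a spectral-radius-$1$ matrix with an extra positive contribution, forcing $\rho>1$. I would make this precise by restricting to the two child-directions $\{1,2\}$ to recover a copy of the $d=2$ matrix $M^{(2)}$ inside $M$, noting $M\geq M^{(2)}+\Delta$ entrywise where $\Delta\geq 0$ collects the $U^{(3)},\ldots,U^{(d)}$ indicators, and then arguing that $\Delta$ is nonzero on the communicating class reached from the root type, so by Perron--Frobenius/irreducibility-on-a-class monotonicity $\rho(M)>\rho(M^{(2)})\geq 1$.

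The main obstacle I expect is the last monotonicity step: strict inequality $\rho(M)>1$ does not follow from mere entrywise domination $M\geq M^{(2)}$ unless one controls reducibility, since $M^{(2)}$ itself is typically reducible (standard pieces give block structure with $\rho=1$ achieved on a single Jordan-type block). I would handle this by identifying the specific recurrent type (the type $N$, or $k=N$ in the balanced case) from which the walk starts and showing that the extra mass $\Delta$ lands inside the irreducible class containing that type — equivalently, that starting the branching process at the appropriate type, the extra offspring channels opened by directions $3,\ldots,d$ feed back into types that are themselves fertile. Establishing this ``the extra drift is not wasted on dead types'' claim is the crux; once it is in place, strict spectral domination on an irreducible block is standard (e.g.\ \cite[Chapter 1]{Sen:06}), giving $\rho>1$, hence survival with positive probability, hence transience a.s.\ by Lemma \ref{lem:Positive probability for escaping suffices for transience}.
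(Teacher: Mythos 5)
Your proposal has a genuine gap at its central step, and the comparison it rests on is not well-defined. You want to find ``a copy of the $d=2$ matrix $M^{(2)}$ inside $M$'' by restricting to the child-directions $\{1,2\}$ and then invoke the fact (implicit in Theorem \ref{thm:iidrotationT2}) that the binary-tree uniform-rotation process always has spectral radius at least $1$. But the uniform rotation model on $\T_d$ uses the cycle $\pi:n\mapsto n+1 \bmod (d+1)$ on $d+1$ symbols with $d+1$ equally weighted rotations, while the $d=2$ model lives on the alphabet $\{0,1,2\}$ with $3$ rotations of weight $1/3$; a sequence $\aseq$ valued in $\{0,\ldots,d\}$ does not project to a $\{0,1,2\}$-valued rotor sequence whose three rotations reproduce the $\{1,2\}$-coordinates of the $d+1$ rotations of $\aseq$, and the entries of $M$ carry the weight $1/(d+1)$ rather than $1/3$. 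So the asserted entrywise domination $M\geq M^{(2)}+\Delta$ with $\rho(M^{(2)})\geq 1$ has no meaning as stated. Even granting some such domination, you correctly identify that strict inequality $\rho(M)>1$ from adding nonnegative mass $\Delta$ requires locating $\Delta$ inside the communicating class of the relevant type --- and you leave exactly that step, which you call the crux, unproven. Finally, your reduction to finitely many types leans on Lemma \ref{lem:per bal bound U on tree}, which requires periodicity and balance that a general rotor sequence need not have.

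The paper's argument is entirely different and much more elementary: it never computes a spectral radius. Writing $\aseq$ (without loss of generality) as starting with a run of $m$ zeros followed by some $x\neq 0$, one reads off from the initial run of each rotation $\pi^\ell\aseq$ that $U^{(i)}_{\pi^\ell\aseq}(1)$ already produces at least one child for the rotation sending $0$ to the slot occupied by $x$, and at least two children for each of the remaining $d-1$ nontrivial rotations; by monotonicity of the $U$-functions these lower bounds hold for \emph{every} type $k\geq 1$, uniformly. Hence the total population of the multi-type process stochastically dominates a single-type Galton--Watson process with offspring law $q(0)=q(1)=1/(d+1)$, $q(2)=(d-1)/(d+1)$, which is supercritical precisely because $d\geq 3$. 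This sidesteps reducibility entirely. If you want to salvage your spectral approach you would need to replace the $d=2$ comparison by a direct uniform lower bound of this kind; as written, the proof does not go through.
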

\begin{proof}
Without loss of generality we assume that $\aseq$ starts with $0$.
Let $\pi\in S_{d+1}$ be the rotation mapping $n\mapsto n+1 \mod (d+1)$.
Let $m\ge 1$ be such that $\aseq(1)=\aseq(2)=\ldots=\aseq(m)=0$ and $x:=\aseq(m+1)\ne 0$. For  $j=d-x$ we have $\pi^j (x)=0$.
The set of rotor sequences $\{\pi^i \aseq: i\notin \{ 0,j\}\}$ has $d-1\ge 2$ elements. We shall now show that the first excursion
is infinite with positive probability. To do this we  prove the that branching process  starting with one particle of type  $1$, i.e.\ $k=1$, will survive with positive probability.
We consider first particles of  type  $1$. We have that
\begin{align*}
U_{\aseq}^{(i)}(1)& =0 \quad \forall i\in\{1,\ldots,d\}, \cr
U_{\pi^{j}\aseq}^{(i)}(1)& = m\delta_{ij} \quad \forall i\in\{1,\ldots,d\}, \cr
U_{\pi^{\ell}\aseq}^{(i)}(1)&\geq  m \delta_{i\ell}  +\delta_{i(\ell+x)}  \quad \forall i\in\{1,\ldots,d\}  \forall \ell\notin\{0,j\}.
\end{align*}
Now, by monotonicity of the $U$-functions we also have that for all $k\geq 2$
\begin{align*}
U_{\pi^{j}\aseq}^{(i)}(k)& \geq m\delta_{ij} \quad \forall i\in\{1,\ldots,d\}, \cr
U_{\pi^{\ell}\aseq}^{(i)}(k)&\geq  m \delta_{i\ell}  +\delta_{i(\ell+x)}  \quad \forall i\in\{1,\ldots,d\}  \forall \ell\notin\{0,j\}.
\end{align*}
Hence, the expected number of children  of a particle   is $d/(d+1) + 2(d-1)/(d+1)=(3d-2)/(d+1)>1,$ since $d\geq 3$. 
Denote by $\zeta_{n}=\sum_{i} \xi_{n}(i)$ the number of particles at time $n$ in the original multi-type Galton-Watson process. A standard coupling argument gives that $(\zeta_{n})_{n\geq 1}$ can be stochastically  bounded below by a Galton-Watson process with offspring distribution $q(0)=q(1)=1/(d+1), q(2)=(d-1)/(d+1).$ As the latter survives with positive probability, so does the multi-type Galton-Watson process.
 \end{proof}

\subsection{Conjectures for uniform shifts on the \texorpdfstring{$d$-ary tree, $d\geq 2$}{regular tree}}


Let $\aseq$ be an $L$-periodic rotor sequence on $\T_{2}$.
Denote by $\mathcal{A}_{rec}^{conj}$ the set consisting of sequences of the form $$\period{0,i_{1}, j_{1},0, i_{2}, j_{2}, \ldots, 0, i_{N}, j_{N}} \mbox{ with }\{i_{\ell},j_{\ell}\}=\{1,2\}, 1\leq \ell \leq N,$$ and all its possible shifts.

Similar to the first part of the  proof of Theorem \ref{thm:iidrotationT2} one checks
 that the rotor walk in the uniform shift model corresponding to $\aseq$ is recurrent a.s.\ if  $\aseq\in \mathcal{A}_{rec}^{conj}$.

\begin{conjecture}\label{conj:shiftbinarytree}
Let $\aseq$ be an $L$-periodic rotor sequence on $\T_{2}$. The rotor walk in the uniform shift model corresponding to $\aseq$ is recurrent a.s.\ if and only if $\aseq\in \mathcal{A}_{rec}^{conj}$.
\end{conjecture}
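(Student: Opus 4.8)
The plan is to route everything through the multi-type Galton--Watson description of Section \ref{sec:d-arry}. Since $\aseq$ is $L$-periodic and, in the substantive case, balanced, Lemma \ref{lem:per bal bound U on tree} confines all types reachable from the initial type $k=N$ to $\{1,\dots,N\}$, so the mean matrix $M=(m(k,\ell))_{1\le k,\ell\le N}$ of \eqref{def:M} is a finite $N\times N$ nonnegative matrix; by Theorem \ref{theorem:bal rc dary} and the nonnegative-matrix criterion recalled after it, the walk is recurrent a.s.\ if and only if $\rho(M)\le 1$. The whole statement thus reduces to showing $\rho(M)=1$ exactly when $\aseq\in\mathcal{A}_{rec}^{conj}$ and $\rho(M)>1$ otherwise. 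I would repeatedly use the elementary identity $U_{\aseq'}^{(1)}(k)+U_{\aseq'}^{(2)}(k)=T_{\aseq'}(k)-k$ (the number of non-zero rotors before the $k$th zero) for a shift $\aseq'$.

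For the ``if'' direction I would show that for $\aseq\in\mathcal{A}_{rec}^{conj}$ the matrix $M$ is lower triangular with every diagonal entry equal to $1$, which forces $\rho(M)=1$ and hence recurrence. This is purely combinatorial and parallels the first part of the proof of Theorem \ref{thm:iidrotationT2}: in $\mathcal{A}_{rec}^{conj}$ the zeros sit on an arithmetic progression of step $3$ and each inter-zero block carries exactly one $1$ and one $2$, so before the $k$th zero one has passed $k-1$ full blocks plus at most one partial initial block; hence $U_{\aseq'}^{(i)}(k)\le k$ for every shift and every $i\in\{1,2\}$, giving $m(k,\ell)=0$ for $\ell>k$. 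The diagonal identity $m(k,k)=1$ then follows from the double-counting relation $\sum_{\aseq'}\#\{i: U_{\aseq'}^{(i)}(k)=k\}=L$ over the $L$ shifts, which I would establish by a bijection between shift-offsets and the positions of the last $1$ (resp.\ $2$) preceding a zero. One checks directly, for example, that $\period{0,1,2,0,2,1}$ gives $M=\left(\begin{smallmatrix}1&0\\1&1\end{smallmatrix}\right)$, with spectral radius $1$.

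The ``only if'' direction is where the work lies, and I would follow the template of the second part of Theorem \ref{thm:iidrotationT2}. If a balanced $\aseq\notin\mathcal{A}_{rec}^{conj}$, then some maximal run of non-zero rotors between two consecutive zeros is not $\{1,2\}$: it is empty, a singleton, of length $\ge 3$, or a repeated pair. In each case I would isolate the few shifts that read this block immediately after a zero and truncate them to $0$ beyond the relevant window (as with the $\cseq_i$ in Theorem \ref{thm:iidrotationT2}), extracting a minorant submatrix $M_{\cseq}$ of $M$ supported on a bounded set of types. A bad block always produces a ``type-increasing'' birth --- a shift and a $k$ with $U^{(i)}(k)>k$, i.e.\ an entry strictly above the diagonal --- and this, together with the critical floor coming from the remaining blocks, makes $M_{\cseq}$ supercritical; then monotonicity of the $U$-functions and the block/minorant comparison of Theorem \ref{thm:iidrotationT2} yield $\rho(M)\ge\rho(M_{\cseq})>1$, whence transience by Lemma \ref{lem:Positive probability for escaping suffices for transience}. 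For instance $\period{0,0,1,2,1,2}$ gives $M=\left(\begin{smallmatrix}2/3&2/3\\2/3&1\end{smallmatrix}\right)$ with $\rho=(5+\sqrt{17})/6>1$.

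The main obstacle is to make the minorant construction work uniformly in the period $L$: each shift enters $M$ with weight only $1/L$, so for a long period containing a single bad block the supercritical perturbation of the otherwise critical, and defective, lower-triangular matrix is of order $1/L$, and one must argue that it genuinely pushes the leading eigenvalue above $1$ rather than merely perturbing a Jordan block. A secondary but real subtlety is the non-balanced case, where the Galton--Watson process has infinitely many types and behaves like a binary branching random walk with step governed by the per-period counts $n_0,n_1,n_2$, so that the relevant threshold is $\max(n_1,n_2)$ versus $n_0$; in particular a backward-drift-dominant sequence such as $\period{0,0,0,1,2}$ is recurrent although it is not in $\mathcal{A}_{rec}^{conj}$. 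For this reason the statement is to be understood for balanced $\aseq$, mirroring the role of balancedness in Theorem \ref{thm:random_starting_point_non-blanced_is_transient} on $\N$, with the non-balanced regime treated separately by the drift comparison just described.
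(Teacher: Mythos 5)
You should first note that the statement you are proving is stated in the paper as a \emph{conjecture}: the paper itself only claims the ``if'' direction (``Similar to the first part of the proof of Theorem \ref{thm:iidrotationT2} one checks\dots''), and supports the ``only if'' direction solely by a computer verification for $L\le 12$. Your reduction to the spectral radius of the mean matrix and your treatment of the ``if'' direction are sound and essentially what the paper has in mind: for $\aseq\in\mathcal{A}_{rec}^{conj}$ the zeros of every shift sit at positions $\equiv c \pmod 3$, so $U^{(1)}(k)+U^{(2)}(k)=2k-3+c\le 2k$ and $|U^{(1)}(k)-U^{(2)}(k)|\le 1$, giving $U^{(i)}(k)\le k$ (lower triangularity), and your double count $\sum_{\aseq'}\#\{i:U^{(i)}_{\aseq'}(k)=k\}=2N+N=L$ does yield $m(k,k)=1$, hence $\rho(M)=1$ and a.s.\ extinction.

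The genuine gap is the ``only if'' direction, and you name it yourself: the minorant construction of Theorem \ref{thm:iidrotationT2} does not transfer. In the rotation model a bad block is read immediately after a zero by a constant fraction ($1/3$ each) of the $d+1$ configurations, so truncation produces a supercritical $M_{\cseq}$ with entries of order $1$. In the shift model only the $O(1)$ shifts whose offset lands inside the bad block see it first, each carrying weight $1/L$, so the entry above the diagonal that you extract is $O(1/L)$ sitting on top of a critical, reducible, \emph{defective} lower-triangular matrix; whether such a perturbation pushes $\rho$ strictly above $1$ is exactly the content of the conjecture, and your sentence ``this \dots makes $M_{\cseq}$ supercritical'' asserts rather than proves it. A proof would need, e.g., a quantitative Perron--Frobenius perturbation argument controlling the left/right eigenvectors of the critical part uniformly in $L$, which is not supplied. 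Two further points: your example matrix for $\period{0,0,1,2,1,2}$ in the shift model is miscomputed (the correct matrix is $\bigl(\begin{smallmatrix}2/3&1/2\\2/3&5/6\end{smallmatrix}\bigr)$ with $\rho=4/3$; your numbers match neither the shift nor the paper's rotation computation in Example \ref{010122}, though the conclusion $\rho>1$ stands). More interestingly, your observation that an unbalanced sequence such as $\period{0,0,0,1,2}$ is recurrent yet lies outside $\mathcal{A}_{rec}^{conj}$ is correct (all reachable types feed into a subcritical type-$1$ class), so the conjecture as literally stated needs to be restricted to balanced sequences, or $\mathcal{A}_{rec}^{conj}$ enlarged; this is a worthwhile remark, but it also means you are proposing to prove an amended statement rather than the one quoted.
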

\begin{remark}
The above conjecture holds  for $L\leq 12$. Indeed, we calculated (using a computer) the largest eigenvalues of the first moment matrices for all possible sequences.
\end{remark}

The notion of being ${A}_{rec}^{conj}$ can naturally be generalized to $d$-ary trees.
\begin{lemma}
Let $\aseq$ be a non-degenerate $L$-periodic rotor sequence on $\T_{d}, d\geq 3$.
The rotor walk in the uniform shift model corresponding to  $\aseq$ is transient a.s.\ if $\aseq\in \mathcal{A}_{rec}^{conj}$.
\end{lemma}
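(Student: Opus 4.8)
The plan is to follow the proof of Theorem \ref{thm:iidrotationTd} almost verbatim: instead of analyzing the full first-moment operator, I would show directly that the branching process $\xi^{1}$ started from a single particle of type $1$ survives with positive probability, and then invoke Lemma \ref{lem:Positive probability for escaping suffices for transience} to upgrade positive-probability escape to almost sure transience. Recall that, under the natural generalization of $\mathcal{A}_{rec}^{conj}$ to $\T_{d}$, such a sequence is the periodic concatenation of $N$ blocks of the form $(0,\pi_{\ell}(1),\ldots,\pi_{\ell}(d))$ with each $\pi_{\ell}$ a permutation of $\{1,\ldots,d\}$; its period is $L=N(d+1)$ and it is balanced. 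So the whole argument reduces to computing the offspring count of a type-$1$ particle and coupling with a supercritical single-type Galton--Watson process.

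First I would compute that offspring count. For a shift $S^{(s)}\aseq$, a type-$1$ particle produces children of types $U^{(1)}_{S^{(s)}\aseq}(1),\ldots,U^{(d)}_{S^{(s)}\aseq}(1)$, and a child is alive exactly when its type is positive. Since $U^{(i)}_{S^{(s)}\aseq}(1)$ counts the $i$'s occurring before the first $0$ of the shifted sequence, the alive children correspond precisely to the distinct non-zero symbols appearing between the start of $S^{(s)}\aseq$ and its first $0$. If the shift begins at a $0$ there are none; if it begins at offset $t\in\{1,\ldots,d\}$ inside a block it sees the $d-t+1$ distinct values $\pi(t),\ldots,\pi(d)$, hence $d-t+1$ alive children. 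Thus, within the $d+1$ possible shift-start positions of a single block, the alive-children counts are exactly $\{0,1,\ldots,d\}$, each once. Running over all $L=N(d+1)$ shifts (each with probability $1/L$), I would conclude that the number $A$ of alive children of a type-$1$ particle is uniform on $\{0,1,\ldots,d\}$, with mean $\mathbf{E}[A]=\tfrac{1}{d+1}\sum_{v=0}^{d}v=d/2$.

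Next I would set up the coupling. Each generalized $U$-function $U^{(i)}_{\aseq}(\cdot)$ is monotone non-decreasing (the obvious $d$-ary analogue of Lemma \ref{lem:subduality}(1)), so any particle of type $k\ge 1$ satisfies $U^{(i)}(k)\ge U^{(i)}(1)$ for all $i$, and hence has at least as many alive children as a type-$1$ particle using the same shift. Coupling the i.i.d.\ shifts vertex by vertex, the total population $\zeta_{n}=\sum_{i}\xi^{1}_{n}(i)$ therefore stochastically dominates a single-type Galton--Watson process whose offspring law is uniform on $\{0,1,\ldots,d\}$. For $d\ge 3$ this dominating process has mean $d/2>1$ and $q(1)=1/(d+1)<1$, hence it is supercritical and survives with positive probability; consequently $\xi^{1}$ survives with positive probability as well. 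By the correspondence of Lemma \ref{lemma:rec_brw} this means $\pr_{o}[X_{n}\ne o\ \forall n>0]>0$, and Lemma \ref{lem:Positive probability for escaping suffices for transience} then yields transience a.s.

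The main obstacle is the bookkeeping in the offspring computation: one must verify carefully that the alive-children counts over the $d+1$ shift-starts of a block really exhaust $\{0,\ldots,d\}$ (so that averaging over all $L$ shifts gives a genuinely uniform law), and that the monotonicity-based coupling remains valid across the many types that may appear in $\xi^{1}$. Everything else parallels Theorem \ref{thm:iidrotationTd}; in particular, the fact that the mean $d/2$ degenerates to $1$ exactly at $d=2$ is consistent with $\mathcal{A}_{rec}^{conj}$ being precisely the recurrent class on $\T_{2}$.
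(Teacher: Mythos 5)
Your proposal is correct and follows essentially the same route as the paper: count, over the $L$ shifts, how many coordinates $i$ satisfy $U^{(i)}(1)\ge 1$ (giving $d-t+1$ alive children for a shift starting at offset $t$ in a block, hence mean $d/2$), use monotonicity of the $U$-functions to dominate $\xi^1$ by a supercritical single-type Galton--Watson process, and conclude transience via positive survival probability. Your per-block bookkeeping (offspring law uniform on $\{0,\ldots,d\}$) is just a cleaner organization of the paper's count $Nd(d+1)/2$ of such $U$-functions divided by $L$.
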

\begin{proof}
Let $\aseq$ be such that $\aseq$ and all its shifts are concatenations of  $1$-standard pieces and let $N=L/(d+1)$. Without loss of generality we assume that $\aseq$ starts with $0$. Then $\aseq$ is of the form
$$\aseq=\period{0,i_{1,1}, i_{1,2},\ldots, i_{1,d} ,0, i_{2,1}, i_{2,2},\ldots, i_{2,d}, \ldots, 0,i_{N,1}, i_{N,2},\ldots, i_{N,d}} $$ with $\{i_{i,1},\ldots, i_{i,d}\}=\{1,2,\ldots,d\}$ for all $i\in \{1,2,\ldots,N\}$.
Now, $U_{2+k(d+1)}^{(i)}(1)\geq 1$ for all $i\in\{1,\ldots,d\}$ and $k\in\{0,\ldots, N-1\}$. Moreover, $U_{3+k(d+1)}^{(i)}(1)\geq 1$ for all $i \in \{1,\ldots,d\}\setminus\{i_{1,1}\}$ and $k\in\{0,\ldots, N-1\}$.
Continuing this observation along all shifts leads that the number of $U$-functions verifying $U(1)\geq 1$ is equal to $ N d (d+1)$. Since $U(1)\geq 1$ implies that $U(x)\geq 1$ for all $x\leq N$ the mean number of offspring (of any type)
of a particle (of any type) in the multi-type Galton-Watson process is at least  $N d (d+1)/L=d/2>1$. Hence, the branching process survives with positive probability and the rotor walk is transient a.s.
\end{proof}

\begin{conjecture}\label{conj:shiftdarytree}
For any $L$-periodic rotor sequence in $\T_{d}, d\geq 3$, the rotor walk in the corresponding uniform shift model is transient a.s.
\end{conjecture}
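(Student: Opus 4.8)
The plan is to deduce transience from survival of the associated branching process via Lemma~\ref{lem:Positive probability for escaping suffices for transience}: it suffices to exhibit one type $k$ for which the multi-type process $\xi^{k}$ from \eqref{eq:def Zkv} survives with positive probability, since by Lemma~\ref{lemma:rec_brw} this is equivalent to the $k$th excursion being infinite. For an unbalanced $L$-periodic $\aseq$ the types of $\xi^{k}$ are unbounded, so---unlike the balanced setting of Theorem~\ref{theorem:bal rc dary}, where Lemma~\ref{lem:per bal bound U on tree} confines the process to finitely many types---$\xi^{k}$ is a genuine infinite-type process with first-moment operator $M=(m(i,j))_{i,j\in\N}$ given by \eqref{def:M} for $p$ the uniform law on the $L$ shifts of $\aseq$. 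The first and, I believe, decisive step is a reduction back to finitely many types: since each $U^{(i)}_{\aseq}(\cdot)$ is non-decreasing, capping types at a threshold $T$---identifying every type $\ge T$ with $T$---yields a finite-type process that is, direction by direction, stochastically dominated by $\xi^{k}$. Hence survival of the capped process implies survival of $\xi^{k}$, and it is enough to find some $T$ for which the $T\times T$ capped mean matrix $M_{T}$ has Perron root $\rho(M_{T})>1$; the equivalence of $\rho(M_{T})>1$ with survival is standard non-negative matrix theory (\cite[Ch.~1]{Sen:06}, \cite[Ch.~V]{AN:72}).

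Next I would settle the bulk of the sequences by a one-generation estimate, as in the proof of Theorem~\ref{thm:iidrotationTd}. Let $g(k)$ denote the expected number of surviving children of a type-$k$ particle under a uniform random shift. Monotonicity gives $g(k)\ge g(1)$ for every $k\ge1$, and a direction-wise coupling dominates the total population of $\xi^{1}$ from below by a single-type Galton--Watson process of mean $g(1)$; thus every sequence with $g(1)>1$ is transient outright. The quantity $g(1)$---the average over the $L$ shifts of $\#\{i\in\{1,\ldots,d\}:U^{(i)}(1)\ge1\}$---has a transparent combinatorial meaning: it is the mean number of distinct non-zero symbols occurring before the first $0$. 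For the $d$-fold symmetric sequences in $\mathcal{A}_{rec}^{conj}$ this reproduces the value $d/2>1$ used in the lemma preceding the conjecture, and $g(1)>1$ persists whenever the $0$'s are sufficiently sparse within a period; all such sequences are transient by the coupling.

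The regime $g(1)\le1$, which arises exactly when the $0$'s are dense, is where the real difficulty lies and is the core of the conjecture. A single generation can no longer certify survival, so one must use that high in the type spectrum the process branches fully: for large $k$ every one of the $d$ directions survives, so a type-$k$ particle has all $d$ children, of types $U^{(i)}(k)$. The tension is that the $0$'s pull the offspring types downward at a rate set by their density, so a lineage repeatedly returns to the bottom of the spectrum; whether $\rho(M_{T})>1$ is therefore decided not by the favorable large-$k$ rows of $M_{T}$ but by its small-type block near type $1$. Concretely I would compute $M_{T}$ from \eqref{def:M} and the shift average, exploit its structure (as in the proofs of Theorems~\ref{thm:iidrotationT2} and~\ref{theorem:bal rc dary}), and try to reduce $\rho(M_{T})>1$ to an explicit spectral inequality phrased in terms of the positions of the $0$'s within one period and the way the uniform shift distributes them among the $d$ children.

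The main obstacle is to control this small-type block uniformly over all $L$-periodic sequences once $d\ge3$, i.e.\ to exclude that the bottom of the type spectrum is merely critical. This is delicate because the effective branching at type $1$ degrades precisely as the $0$'s become denser, so the argument must trade the abundance of children guaranteed at large types against the loss incurred near type $1$ rather than rely on either in isolation. I would attempt a multi-generation renewal: couple $\xi^{k}$ over $O(\log k)$ generations, during which the types descend to $O(1)$ while the particle count grows, and then show that the expected number of surviving low-type particles produced per renewal exceeds $1$, uniformly in $\aseq$. Making this renewal estimate quantitative and uniform---so as to establish strict supercriticality of the small-type block even for the $0$-dense sequences---is the crux on which a full proof of the conjecture turns.
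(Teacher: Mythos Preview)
The statement you are addressing is Conjecture~\ref{conj:shiftdarytree}, and the paper offers no proof of it: it is listed as an open problem, with only the preceding lemma establishing the special case $\aseq\in\mathcal{A}_{rec}^{conj}$. There is therefore no ``paper's own proof'' to compare against.

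Your proposal is not a proof but a strategy, and you are candid about this. The reduction to survival of $\xi^{k}$ via Lemmas~\ref{lemma:rec_brw} and~\ref{lem:Positive probability for escaping suffices for transience} is correct, and the capping argument to pass to a finite-type process dominated by $\xi^{k}$ is sound. Your one-generation estimate $g(1)>1$ does dispose of a large class of sequences and recovers the $d/2$ bound of the lemma for $\mathcal{A}_{rec}^{conj}$. However, the regime $g(1)\le 1$ is genuine: for instance on $\T_{3}$ with $\aseq=\period{0,0,0,0,1,2,3}$ one computes $g(1)=6/7<1$, so the single-generation coupling fails there. Your proposed remedy---a multi-generation renewal trading high-type branching against low-type loss---is plausible in spirit but is precisely the step you do not carry out, and you say so yourself.

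In short, your write-up is an honest outline that isolates the difficulty rather than resolves it. The missing ingredient is exactly what the paper leaves open: a uniform (in $\aseq$) strict-supercriticality estimate for the small-type block of $M_{T}$ when the $0$'s are dense. Absent that estimate, the proposal does not prove the conjecture, and no argument in the paper fills the gap either.
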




\section{Acknowledgments}
T.O.\ would like to thank Igor Shinkar for stimulating discussions at early stages of the project.
The project was progressed while both authors were visiting the ``Institut f\"ur Mathematische Strukturtheorie'' at TU-Graz. 
The authors are grateful to the institute for its financial support, and to all its members for their warm hospitality.
The authors wish to thank the referees for  their valuable comments that improved the presentation and clarity of the paper. In particular, they are grateful to one of the referees for spotting a gap 
in the proof of Lemma 2.3 and for suggesting one-line proofs for Lemmas 2.11 and 2.12 that were adopted in the current version.   

\end{document}